\DeclareMathOperator{\trace}{trace}
\newtheorem{theorem}{Theorem}[section]
\newtheorem{lemma}[theorem]{Lemma}
\newtheorem{cor}[theorem]{Corollary}
\theoremstyle{definition}
\newtheorem{defi}[theorem]{Definition}
\newtheorem{example}[theorem]{Example}
\theoremstyle{remark}
\newtheorem{remark}[theorem]{Remark}
\numberwithin{equation}{section}
\newcommand{\rr}{{\mathbb R}}
\newcommand{\rd}{{\mathbb R^d}}
\newcommand{\rmm}{{\mathbb{R}^m}}
\newcommand{\nat}{{\mathbb N}}
\newcommand{\ganz}{{\mathbb Z}}
\newcommand{\Exp}{{\mathbb E}}
\newcommand{\Gr}{\operatorname{Gr}}
\newcommand{\dimh}{\dim_{\mathcal{H}}}
\newcommand{\cardim}{\operatorname{cardim}}
\newcommand{\llambda}{{\pmb{\lambda}}}
\begin{document}
\sloppy
\title[Carrying dimension of self-affine occupation measures]{On the carrying dimension of occupation measures for self-affine random fields} 

\author{P. Kern}
\address{Peter Kern, Mathematical Institute, Heinrich-Heine-University D\"usseldorf, Universit\"atsstr. 1, D-40225 D\"usseldorf, Germany}
\email{kern\@@{}hhu.de} 

\author{E. S\"onmez}
\address{Ercan S\"onmez, Mathematical Institute, Heinrich-Heine-University D\"usseldorf, Universit\"atsstr. 1, D-40225 D\"usseldorf, Germany}
\email{ercan.soenmez\@@{}hhu.de} 

\date{\today}

\begin{abstract}
Hausdorff dimension results are a classical topic in the study of path properties of random fields. This article presents an alternative approach to Hausdorff dimension results for the sample functions of a large class of self-affine random fields. We present a close relationship between the carrying dimension of the corresponding self-affine random occupation measure introduced by U.\ Z\"ahle and the Hausdorff dimension of the graph of self-affine fields. In the case of exponential scaling operators, the dimension formula can be explicitly calculated by means of the singular value function. This also enables to get a lower bound for the Hausdorff dimension of the range of general self-affine random fields under mild regularity assumptions. 
\end{abstract}

\keywords{random measure, occupation measure, self-affinity, random field, operator selfsimilarity, range, graph, operator semistable, Hausdorff dimension, carrying dimension, singular value function}
\subjclass[2010]{Primary 60G18, 60G57; Secondary 28A78, 28A80, 60G60, 60G17, 60G51, 60G52}
\thanks{This work has been partially supported by Deutsche Forschungsgemeinschaft (DFG) under grant KE1741/6-1}
\maketitle

\baselineskip=18pt

\section{Introduction}

Let $U \in \mathbb{R}^{d \times d}$ and $V \in \mathbb{R}^{m \times m}$ be contracting,  non-singular matrices, i.e.\ $|\rho|\in(0,1)$ for any eigenvalue $\rho$ of $U$, respectively $V$. According to \cite[Definition 4.1]{Z3} a random field $X=\{X(t)\}_{t\in\rd}$ on $\rmm$ defined on a probability space $(\Omega, \mathcal{A}, P)$ is called $(U,V)$-{\it self-affine} if the following four conditions hold:
\begin{itemize}
\item[(i)] The field obeys the scaling relation
\begin{equation*}
	\{X( Ut)\}_{t \in\rd}\stackrel{\rm fd}{=} \{ V X(t)\}_{t \in\rd},
\end{equation*}
where ``$\stackrel{\rm fd}{=}$'' denotes equality of all finite-dimensional marginal distributions.
\item[(ii)] $X$ has {\it stationary increments}, i.e.\ $X(t)-X(s)\stackrel{\rm d}{=}X(t-s)$ for all $s,t\in\rd$, where ``$\stackrel{\rm d}{=}$'' denotes equality in distribution.
\item[(iii)] $X$ is {\it proper}, i.e.\ $X(t)$ is not supported on any lower dimensional hyperplane of $\rmm$ for all $t \neq 0$.
\item[(iv)] The mapping $X:\rd\times \Omega\to\rmm$ is ($\mathcal B(\rd)\otimes\mathcal A) - \mathcal B(\rmm)$-measurable with respect to the Borel-$\sigma$-algebras of $\rd$ and $\rmm$. 
\end{itemize}
Note that (i) implies $X(0)=0$ almost surely and that (iv) is fulfilled if $X$ has continuous sample functions (or right-continuous sample paths in case $d=1$).
Inductively, from (i) we get $\{X( U^nt)\}_{t \in\rd}\stackrel{\rm f.d.}{=} \{ V^n X(t)\}_{t \in\rd}$ for every $n\in\ganz$ and thus self-affinity weakens the assumption of self-similarity \cite{Lamperti,LahaRo,EmbrMaej,DidMeePip} to a discrete scaling property, which is also called semi-selfsimilarity \cite{MaeSat} in the context of stochastic processes, where $d=1$ and usually we have the restriction $t\geq0$.

Over the last decades there has been increasing attention in such random fields in theory as well as in applications. Posssible applications can be found in such diverse fields as engineering, finance, physics, hydrology, image processing or network analysis; e.g., see \cite{Benson,Bianchi,Chiles,Davies,DP1,DP2,Harba,Levy,Ponson,Roux,WillPaxTaqq} and the literature cited therein. 
Particularly, in the study of sample path behavior, it is of considerable interest to determine fractal dimensions such as Hausdorff dimension of random sets depending on the sample paths of a self-affine random field. E.g., we refer to \cite{Fal,Mattila} for a comprehensive introduction to fractal geometry and the notion of Hausdorff dimension.

The main objective in this paper is the occupation measure $\tau_X$ of a self-affine random field $X$, measuring the size in $\rd$ the graph of $X$ spends in a Borel set of $\rd\times\rmm$ with respect to Lebesgue measure. In a series of papers \cite{Z1,Z2,Z3} U.\ Z\"ahle investigated this object in detail, which serves as a starting point of  our considerations. In particular, Z\"ahle \cite{Z3} showed that the occupation measure $\tau_X$ of a self-affine random field $X$ is Palm distributed and itself a self-affine random measure, see Section 2 for details. This allows to study Hausdorff dimension results through the notion of carrying dimension of $\tau_X$ introduced in \cite{Z1}. Heuristically, the carrying dimension of a Borel measure is the minimal Hausdorff dimension for Borel sets assigning positive measure; see Definition \ref{22cardim} below for the precise mathematical description. From the definition it is  obvious that the Hausdorff dimension of the graph of a self-affine random field is bounded from below by the carrying dimension of the occupation measure $\tau_X$ if the latter exists. Our main aim is to show that under a natural condition the carrying dimension of $\tau_X$ exists and almost surely coincides with the Hausdorff dimension of the graph of $X$. This gives the perspective to calculate the Hausdorff dimension of the graph of $X$ by means of the carrying dimension of $\tau_X$ and vice versa. Under the mild additional assumption of boundedly continuous intensity, see Definition \ref{24bci} below, U.\ Z\"ahle \cite{Z3} further showed that the carrying dimension of $\tau_X$ can be calculated by means of the singular value function. This well known method is suitable in fractal geometry to derive the Hausdorff dimension of self-affine sets arising from iterated function systems \cite{Fal1988,Fal1992,Fal2013} and strengthens our approach. It enables us to also derive a lower bound for the Hausdorff dimension of the range of self-affine random fields under the boundedly continuous intensity assumption.

Much effort has been made in the last decades in order to calculate the Hausdorff dimension of the range and the graph of the paths arising from several special classes of self-affine random fields. Classically, the Hausdorff dimension is determined by calculating an upper and a lower bound separately. This approach requires an a priori educated guess on the true value of the Hausdorff dimension. A typical method in the calculation of an upper bound is to find an efficient covering of the graph for example by using sample path properties such as H\"older continuity or independent increments, whereas the calculation of a lower bound is usually related to potential theoretic methods. A further aim of the present paper is to provide candidates for the Hausdorff dimension of the graph and the range of self-affine random fields in case the contracting non-singular operators $U$ and $V$ are given by exponential matrices, that is $U= c^E$ and $V=c^D$, where $0<c<1$ and $E \in \mathbb{R}^{d \times d}$, $D \in \mathbb{R}^{m \times m}$ are matrices with positive real parts of their eigenvalues.  
In many situations the appearance of exponential scaling matrices is quite natural in the context of self-similar or self-affine random fields and processes; see \cite{HudsonMason, MS, LiXiao, DidMeePip}. Under the condition of boundedly continuous intensity the above candidates always serve as lower bounds for the Hausdorff dimension of the graph and the range of self-affine random fields. Known methods to derive corresponding upper bounds heavily depend on further properties of the field such as H\"older continuity or independent increments and should be derived case by case elsewhere. 
In our approach we particularly elucidate the intuition that the Hausdorff dimension of the graph and the range over sample paths of self-affine random fields should only depend on the real parts of the eigenvalues of $E$ and $D$ as well as their multiplicity.

The rest of this article is structured as follows. Section 2 basically serves as an introduction to self-affine random measures as given in \cite{Kal,Z1,Z2,Z3} which will be applied in order to establish the main results of this paper. Here, we adopt some notation and repeat fundamental notions and results from \cite{Z1,Z2,Z3} concerning self-affine random measures, Palm distributions, the carrying dimension and the boundedly continuous intensity condition. Section 3 is the core part of this article, where we formulate and prove the above mentioned main results. Finally, in Section 4 we show that our results can be applied to large classes of self-affine random fields, namely to operator-self-similar stable random fields introduced by Li and Xiao \cite{LiXiao}, and to operator semistable L\'evy processes. For these particular classes of self-affine random fields, our candidates derived by means of the singular value function in Section 3 are in fact the true values for the Hausdorff dimension of the graph and the range as recently shown in \cite{Soenmez1,Soenmez2,Kern,Wedrich}. Furthermore, our results may be useful to derive Hausdorff dimension results for classes of random processes and fields, for which this still remains an open question, e.g.\ for multiparameter operator semistable L\'evy processes or certain semi-selfsimilar Markov processes.

\section{Preliminaries}

In this section we recall some basic facts on random measures and Palm distributions which will be needed for our approach. We further introduce the main objectives, the occupation measure of a self-affine random field and its carrying dimension.

\subsection{Random measures, Palm distributions and occupation measures}

Let $\mathcal{M}_{\mathbb{R}^n}$ be the set of all locally finite measures on $\mathbb{R}^n$ equipped with the corresponding $\sigma$-algebra $\mathcal{B} (\mathcal{M}_{\mathbb{R}^n})$ {generated by the mappings $\mathcal{M}_{\mathbb{R}^n}\ni\mu\mapsto\mu(B)$ for all bounded sets $B\subset\mathbb R^n$}; e.g., see \cite{Kal} for details. A random variable {$\xi:\Omega\to\mathcal{M}_{\mathbb{R}^n}$} is called a {\it random measure} on $\mathbb{R}^n$. For any random measure $\xi$ denote by $P_\xi$ its distribution. Note that $P_\xi$ is a probability measure on $( \mathcal{M}_{\mathbb{R}^n} , \mathcal{B} (\mathcal{M}_{\mathbb{R}^n})) $. Furthermore, it is clear that the mapping
$$\mathbb{E} [ \xi ]  :  {\mathcal{B}(\mathbb{R}^n)} \to [0, \infty ],\quad A \mapsto \mathbb{E} [ \xi (A) ]=\int_{\mathcal{M}_{\mathbb{R}^n}}\mu(A)\,dP_{\xi}(\mu)$$
is a (deterministic) Borel measure called the {\it intensity measure} of $\xi$. 

For any measure $\mu \in \mathcal{M}_{\mathbb{R}^n}$ and $z \in \mathbb{R}^n$ let $T_z \mu$ be the translation measure given by $ T_z \mu (B) = \mu (B-z)$
for all $B \in \mathcal{B} (\mathbb{R}^n)$. We say that a random measure on $\mathbb{R}^n$ is {\it stationary} if
$ P_\xi = P_{T_z \xi}$
for any $z \in \mathbb{R}^n$.
Let {$x>0$} and $W \in \mathbb{R}^{n \times n}$ be a non-singular operator. A random measure $\xi$ is called $(W,x)$-{\it self-affine} if
$$ \xi (A) \stackrel{\rm d}{=} x \cdot \xi (W^{-1} A)\quad\text{ for any }A  \in \mathcal{B} (\mathbb{R}^n).$$

We now turn to the definition of Palm distributions. A $\sigma$-finite measure on $\big( \mathcal{M}_{\mathbb{R}^n} , \mathcal{B} (\mathcal{M}_{\mathbb{R}^n})\big) $ shall be called {\it quasi-distribution}. Again, a translation invariant quasi-distribution $Q$ satisfying
$$ dQ(\mu) = dQ(T_z \mu)\quad\text{ for every }z \in \mathbb{R}^n$$
is called {\it stationary}. For a stationary quasi-distribution it is easy to see that the Borel measure $A\mapsto\int_{\mathcal{M}_{\mathbb{R}^n}} \mu (A)\,dQ(\mu)$ is translation invariant and thus there exists a constant $c_Q \in [0, \infty]$ such that 
$$ \int_{\mathcal{M}_{\mathbb{R}^n}} \mu (A)\,dQ(\mu) = c_Q \cdot \llambda_n (A)\quad\text{ for any }A \in \mathcal{B} (\mathbb{R}^n), $$
where $\llambda_n$ denotes the Lebesgue measure on $\mathbb{R}^n$. Note that for a stationary random measure $\xi$ this implies that there is a constant $c_\xi \in [0, \infty]$ such that the intensity measure satisfies
$\mathbb{E} [\xi] = c_\xi \cdot \llambda_n$. Throughout this paper, we will refer to $c_\xi$ and $c_Q$ as the {\it intensity constants} of $\xi$, respectively $Q$. For a stationary quasi-distribution $Q$ the measure $Q^0$ defined by
\begin{equation} \label{21palm}
Q^0 (G) = \frac{1}{\llambda_n(A)} \int_{\mathcal{M}_{\mathbb{R}^n}} \int_A \mathbbm{1}_G(T_{-z} \mu)\, d \mu (z) \,dQ(\mu)\quad\text{ for all } G \in \mathcal{B} (\mathcal{M}_{\mathbb{R}^n}) ,
\end{equation} 
is independent of the choice of $A  \in \mathcal{B} (\mathbb{R}^n)$ as long as $0 < \llambda_n(A) < \infty$ and it is called the {\it Palm measure} of $Q$; see \cite[page 85]{Z1}. Note that $Q^0\ll Q$ by \eqref{21palm}, i.e.\ any $Q$-nullset is also a $Q^0$-nullset. A random measure $\xi$ is called {\it Palm distributed} if there exists a stationary quasi-distribution $Q$ with Palm measure $Q^0$ such that
\begin{equation}\label{21palma}
P_\xi = c_Q^{-1} \cdot Q^0,
\end{equation}
where $c_Q=Q^0(\mathcal{M}_{\mathbb{R}^n}) \in (0, \infty)$ is the intensity constant of $Q$.

Let $f: \rd \to \rmm$ be a Borel-measurable function. Then the {\it occupation measure} of $f$ is a Borel measure $\tau_f$ on $\rd \times \rmm$ uniquely defined by
$$ \tau_f (A \times B) = \llambda_d \{ t \in A : f(t) \in B \} $$
for all $A  \in \mathcal{B} (\mathbb{R}^d)$, $B  \in \mathcal{B} (\mathbb{R}^m)$. Note that $\tau_f$ is concentrated on the graph of $f$ and, to be more precise, we may also say that $\tau_f$ is the occupation measure of the graph. For any  $(U,V)$-self-affine random field $\{X(t)\}_{t\in\rd}$ on $\rmm$ we get by the transformation rule
\begin{align*}
\tau_X(A\times B) & =\int_A\mathbbm{1}_B(X(t))\,d\llambda_d(t)=\det U\cdot\int_{U^{-1}(A)}\mathbbm{1}_B(X(Ut))\,d\llambda_d(t)\\
& \stackrel{\rm d}{=}\det U\cdot\int_{U^{-1}(A)}\mathbbm{1}_B(VX(t))\,d\llambda_d(t)\\
& =\det U\cdot\int_{U^{-1}(A)}\mathbbm{1}_{V^{-1}(B)}(X(t))\,d\llambda_d(t)\\
& =\det U\cdot\tau_X(U^{-1}(A)\times V^{-1}(B))=\det U\cdot\tau_X((U \oplus V)^{-1}(A\times B)),
\end{align*}
where $U \oplus V{\in\rr^{(d+m)\times(d+m)}}$ denotes the block-diagonal matrix. Hence the occupation measure $\tau_X$ defines a $(U \oplus V, \det U)$-self-affine random measure. Moreover, it is Palm distributed by the following Lemma due to U.\ Z\"ahle \cite{Z3}.

\begin{lemma}{\cite[Proposition 5.2]{Z3}} \label{21om}
Let $\{X(t)\}_{t\in\rd}$ be a $(U,V)$-self-affine random field on $\rmm$ as defined in Section 1. Denote by $\tau_X$ its occupation measure on $\rd \times \rmm$. Then $\tau_X$ is a Palm distributed and $(U \oplus V, \det U)$-self-affine random measure.
\end{lemma}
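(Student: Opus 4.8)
The plan is to treat the two assertions separately. The self-affinity of $\tau_X$ is essentially the transformation-rule computation carried out just above: it shows that for every bounded rectangle $A\times B\subset\rd\times\rmm$ one has $\tau_X(A\times B)\stackrel{\rm d}{=}\det U\cdot\tau_X\big((U\oplus V)^{-1}(A\times B)\big)$. To turn this into the stated identity of random measures, namely $\tau_X\stackrel{\rm d}{=}\det U\cdot\tau_X\circ(U\oplus V)^{-1}$ as random elements of $\mathcal{M}_{\mathbb{R}^{d+m}}$, I would repeat the same computation simultaneously for finitely many rectangles $A_1\times B_1,\dots,A_k\times B_k$. The point is that, by the joint measurability assumption (iv) together with Fubini, each $\int_{U^{-1}A_i}\mathbbm{1}_{B_i}(X(Ut))\,d\llambda_d(t)$ is a measurable functional of the finite-dimensional distributions of the field $\{X(Ut)\}_{t\in\rd}$; hence the scaling relation (i) --- which is exactly equality of \emph{all} finite-dimensional marginals of $\{X(Ut)\}_{t}$ and of $\{VX(t)\}_{t}$ --- upgrades the single-rectangle identity to the joint one $\big(\tau_X(A_i\times B_i)\big)_{i\le k}\stackrel{\rm d}{=}\big(\det U\cdot\tau_X((U\oplus V)^{-1}(A_i\times B_i))\big)_{i\le k}$. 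Since $\mathcal{B}(\mathcal{M}_{\mathbb{R}^{d+m}})$ is generated by the evaluations $\mu\mapsto\mu(C)$ at bounded Borel sets, and a locally finite measure is determined by its values on bounded rectangles, a monotone-class argument then passes from these finite-dimensional identities to equality of the two laws on $\mathcal{M}_{\mathbb{R}^{d+m}}$. This part is routine.

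The substance is Palm-distributedness, and here I would use the stationary-increments property (ii) as the driving mechanism. For each deterministic $s\in\rd$ put $X^s:=X(\cdot+s)-X(s)$; then $X^s\stackrel{\rm fd}{=}X$, and the same measurable-functional reasoning as above gives $\tau_{X^s}\stackrel{\rm d}{=}\tau_X$ as random measures on $\rd\times\rmm$. A change of variables identifies $\tau_{X^s}$ with the re-rooted measure $T_{-(s,X(s))}\tau_X$, so $\tau_X$ satisfies $T_{-(s,X(s))}\tau_X\stackrel{\rm d}{=}\tau_X$ for every $s\in\rd$ --- the homogeneity along its own graph that underlies the Palm description. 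Following Z\"ahle, I would then build a stationary quasi-distribution $Q$ on $\mathcal{M}_{\mathbb{R}^{d+m}}$ by a Palm inversion (morally: spreading the occupation measure of the graph of $X$ over translations in a translation-covariant fashion) and verify that (a) $Q$ is $\sigma$-finite, (b) $Q$ is translation-invariant, $dQ(\mu)=dQ(T_z\mu)$ for all $z\in\rr^{d+m}$ --- checking separately the shift directions in $\rd$, which are absorbed by shift-invariance of $\llambda_d$ together with $X^s\stackrel{\rm fd}{=}X$, and the directions in $\rmm$, since a constant vertical translation of the graph commutes with the construction --- and (c) the Palm measure $Q^0$ obtained from \eqref{21palm} equals $c_Q\cdot P_{\tau_X}$ with $c_Q=Q^0(\mathcal{M}_{\mathbb{R}^{d+m}})\in(0,\infty)$, finiteness and positivity of $c_Q$ reflecting that $\tau_X$ is almost surely a non-zero, locally finite measure and that the chosen fundamental domain has finite positive $\llambda_d$-measure. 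Then \eqref{21palma} gives the claim.

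I expect the last point to be the real obstacle: choosing the fundamental domain in the Palm inversion so that the construction is genuinely translation-covariant on all of $\rr^{d+m}$ while still yielding a quasi-distribution with \emph{finite} positive intensity constant $c_Q$, even though the graph of $X$ is a $\llambda_{d+m}$-null set carrying infinite total occupation mass; and confirming that re-rooting reproduces $P_{\tau_X}$ itself (occupation over all of $\rd$) rather than an occupation measure over a bounded time window. This is precisely what \cite[Proposition 5.2]{Z3} does, leaning on the structure theory of self-affine and Palm random measures in \cite{Z1,Z3}; by contrast the self-affinity bookkeeping and the finite-dimensional-to-random-measure upgrade are comparatively mechanical. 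One further caveat: the step $X^s\stackrel{\rm fd}{=}X$ uses stationarity of the whole increment field, so one should make sure this is the form of ``stationary increments'' in force rather than merely pairwise stationarity of $X(t)-X(s)$.
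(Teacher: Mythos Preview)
The paper does not actually prove this lemma: it records the transformation-rule computation immediately preceding the statement (which gives the single-rectangle identity $\tau_X(A\times B)\stackrel{\rm d}{=}\det U\cdot\tau_X((U\oplus V)^{-1}(A\times B))$) and then simply cites \cite[Proposition~5.2]{Z3} for the full assertion, in particular for Palm-distributedness. So there is no in-paper proof to compare your proposal against; your sketch is an attempt to reconstruct what Z\"ahle does, and the paper deliberately defers that.

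Your outline of the self-affinity upgrade (from single rectangles to finite families via the fd-equality in (i), then a monotone-class argument to reach $\mathcal{B}(\mathcal{M}_{\mathbb{R}^{d+m}})$) is the natural completion of the paper's displayed computation and is sound. For the Palm part, your plan to use the re-rooting identity $T_{-(s,X(s))}\tau_X\stackrel{\rm d}{=}\tau_X$ and a Palm inversion is in the right spirit, and you correctly flag the genuine technical point: the construction of a stationary quasi-distribution $Q$ with finite positive intensity constant whose Palm measure is $c_Q\cdot P_{\tau_X}$. That is exactly the content of \cite[Proposition~5.2]{Z3}, and the paper does not reproduce it.

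Your closing caveat is well placed and worth emphasizing: the paper's condition~(ii) only asserts pairwise stationarity $X(t)-X(s)\stackrel{\rm d}{=}X(t-s)$, whereas your argument invokes $X^s=X(\cdot+s)-X(s)\stackrel{\rm fd}{=}X$, i.e.\ stationarity of the full increment field. The latter is what Z\"ahle's framework in \cite{Z3} actually uses, so if you intend to supply a self-contained proof you should either strengthen the standing hypothesis~(ii) accordingly or check that the weaker pairwise form suffices for the Palm construction.
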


\subsection{Carrying dimension}

We now introduce the notion of carrying dimension defined in \cite{Z1} and recall a result from \cite{Z3} on how the carrying dimension of the occupation measure of self-affine random fields, under certain regularity assumptions, can be explicitely calculated.

\begin{defi} \label{22cardim}
Let $\mu \in \mathcal{M}_{\mathbb{R}^n}$ be a Borel measure on $\mathbb{R}^n$. We say that $\mu$ has  {\it carrying dimension} $\mathfrak d\in[0,n]$, in symbols $\mathfrak d=\cardim \mu$, if the following two conditions are satisfied.
\begin{enumerate}
\item[(i)] $\mu (A)>0$ implies $\dimh A \geq{\mathfrak d}$ for any $A  \in \mathcal{B} (\mathbb{R}^n)$.
\item [(ii)] {There} exists a set $B  \in \mathcal{B} (\mathbb{R}^n)$ with $\mu (\mathbb{R}^n \setminus B ) = 0$ and $\dimh B \leq{\mathfrak d}$.
\end{enumerate}
\end{defi}

This definition is closely related to the lower and upper Hausdorff dimension of the Borel measure $\mu$ given by
\begin{align*}
\dim_\ast\mu & =\inf\{\dim_{\mathcal H}A:A\in\mathcal B(\rr^n),\,\mu(A)>0\},\\
\dim^\ast\mu & =\inf\{\dim_{\mathcal H}B:B\in\mathcal B(\rr^n),\,\mu(\rr^n\setminus B)=0\},
\end{align*}
and discussed in \cite{Cut,HuTay,Edg}. Obviously, the carrying dimension of $\mu$ exists if and only if $\dim_\ast\mu=\dim^\ast\mu$ and in this case these values are all equal.

The following Lemma of U.\ Z\"ahle \cite{Z2} is useful to derive a lower bound of the carrying dimension. Its proof can be found in \cite[Theorem 1.4]{Zalt}. 

\begin{lemma}{ \cite[Lemma 2.1]{Z2}} \label{32lowerbound}
Let $\mu \in \mathcal{M}_{\mathbb{R}^n}$, $\gamma \geq 0$ and $B  \in \mathcal{B} (\mathbb{R}^n)$. Suppose that
$$ \int_{\{\| z-x \| <1\} } \| z-x \|^{-\gamma} \,\mu (dz) < \infty $$
for $\mu$-almost all $x \in B$. Then $\mu ( B' \cap B) > 0$ implies $\dimh B' \geq \gamma$ for any $B'  \in \mathcal{B} (\mathbb{R}^n)$ and, consequently, $\cardim \mu \geq \gamma$.
\end{lemma}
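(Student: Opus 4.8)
The plan is to establish the first assertion by the classical energy (Frostman) method and then to read off the statement about $\cardim\mu$. Concretely, given a Borel set $B'$ with $\mu(B'\cap B)>0$ I would produce a non-zero \emph{finite} Borel measure $\nu$ concentrated on $B'\cap B$ whose $\gamma$-energy $I_\gamma(\nu)=\iint\|z-x\|^{-\gamma}\,\nu(dz)\,\nu(dx)$ is finite; the converse to Frostman's lemma (equivalently, positivity of the Riesz $\gamma$-capacity forcing $\dimh\,\cdot\,\ge\gamma$, which is exactly the content of \cite[Theorem~1.4]{Zalt}) then gives $\dimh(B'\cap B)\ge\gamma$, hence $\dimh B'\ge\gamma$.

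For the construction, first note that the potential
\[
\phi(x)=\int_{\{\|z-x\|<1\}}\|z-x\|^{-\gamma}\,\mu(dz),\qquad x\in\rr^n,
\]
is Borel measurable by Tonelli's theorem, since $(x,z)\mapsto\|z-x\|^{-\gamma}\mathbbm{1}_{\{\|z-x\|<1\}}$ is a non-negative Borel function. By hypothesis the set $B_0=\{x\in B:\phi(x)<\infty\}$ satisfies $\mu(B\setminus B_0)=0$, so $\mu(B'\cap B_0)=\mu(B'\cap B)>0$. Writing $B'\cap B_0$ as the increasing union over $N\in\nat$ of the sets $E_N=\{x\in B'\cap B_0:\phi(x)\le N,\ \|x\|\le N\}$, continuity of $\mu$ from below yields some $N$ with $\mu(E_N)>0$. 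Put $\nu=\mu|_{E_N}$; since $E_N$ is bounded and $\mu$ is locally finite, $\nu$ is a finite non-zero Borel measure concentrated on $E_N\subseteq B'\cap B\subseteq B'$.

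To bound the energy, for each $x\in E_N$ I would split the inner integral at distance $1$:
\[
\int\|z-x\|^{-\gamma}\,\nu(dz)\le\int_{\{\|z-x\|<1\}}\|z-x\|^{-\gamma}\,\mu(dz)+\int_{\{\|z-x\|\ge1\}}1\,\nu(dz)\le\phi(x)+\nu(\rr^n)\le N+\nu(\rr^n),
\]
using $\nu\le\mu$ on the near part and $\|z-x\|^{-\gamma}\le1$ (as $\gamma\ge0$) on the far part. Integrating in $\nu(dx)$ gives $I_\gamma(\nu)\le\big(N+\nu(\rr^n)\big)\,\nu(\rr^n)<\infty$, which completes the argument for $\dimh B'\ge\gamma$. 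The final assertion follows by taking $B$ with $\mu(\rr^n\setminus B)=0$: then any Borel $A$ with $\mu(A)>0$ satisfies $\mu(A\cap B)=\mu(A)>0$, hence $\dimh A\ge\gamma$, which is condition (i) of Definition~\ref{22cardim} with $\mathfrak d=\gamma$, i.e.\ $\cardim\mu\ge\gamma$. I do not expect a serious obstacle here; the only delicate points are the simultaneous truncation to $\{\phi\le N\}$ and to a ball (needed so that $\nu$ is at once non-zero, finite, and of finite $\gamma$-energy), the measurability of $\phi$, and invoking the correct direction of the capacity–dimension relationship.
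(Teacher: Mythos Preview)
The paper does not supply its own proof of this lemma; it simply records the statement from \cite[Lemma~2.1]{Z2} and points to \cite[Theorem~1.4]{Zalt} for the argument. Your proof is correct and is precisely the standard Frostman energy argument one would expect behind that citation: restricting $\mu$ to the bounded sublevel set $E_N=\{x\in B'\cap B_0:\phi(x)\le N,\ \|x\|\le N\}$ of the truncated potential produces a non-zero finite measure $\nu$ whose $\gamma$-energy is bounded by $(N+\nu(\rr^n))\,\nu(\rr^n)<\infty$, and the capacity--dimension inequality then forces $\dimh B'\ge\dimh E_N\ge\gamma$. The measurability of $\phi$, the truncation in both $\phi$ and $\|x\|$ to secure simultaneously positivity, finiteness, and finite energy, and the direction of Frostman you invoke are all handled correctly. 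Your reading of the concluding clause---that ``consequently, $\cardim\mu\ge\gamma$'' tacitly presupposes $\mu(\rr^n\setminus B)=0$ so that $\mu(A)>0$ forces $\mu(A\cap B)>0$---is also the natural one.
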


For an explicit calculation of the carrying dimension of the occupation measure $\tau_X$ of a $(U,V)$- self-affine random field $\{X(t)\}_{t\in\rd}$ the following condition, called the {\it boundedly continuous intensity} (b.c.i.) condition in \cite{Z3}, is crucial and sufficient. 

\begin{defi}\label{24bci}
Let $\xi$ be a $(W,x)$-self-affine random measure. Then $\xi$ is said to satisfy the b.c.i.\ condition (with respect to $W$) if there exists a constant $0<C<\infty$, not depending on $W$, such that
$$\mathbb{E} [ \xi](A)\leq C \cdot \llambda_{d+m}(A)\quad\text{ for any Borel set } A\subset [-1,1]^{d+m} \setminus W([-1,1]^{d+m}).$$
\end{defi}

An easy sufficient condition for the occupation measure to fulfill the b.c.i.\ condition is the following.

\begin{lemma}\label{specialocm}
Suppose that for any $t\in\rd\setminus\{0\}$ the distribution of $X(t)$ has a density $x\mapsto p_t(x)$ with respect to $\llambda_m$, then for $z=(t,x)\in\rd\times\rmm$ we have
$$d\mathbb{E} [ \tau_X](z)=p_t(x)\,d\llambda_d(t)\,d\llambda_m(x).$$
Furthermore, if there exists a constant $0<C<\infty$ such that $p_t(x)\leq C$ for any $(t,x)\in[-1,1]^{d+m} \setminus W([-1,1]^{d+m})$
it follows immediately that the b.c.i.\ condition is fulfilled.
\end{lemma}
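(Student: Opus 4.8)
The plan is to first compute the intensity measure $\mathbb E[\tau_X]$ explicitly and then obtain the b.c.i.\ bound by a one-line integration. Recall from the computation preceding Lemma~\ref{21om} that $\tau_X(A\times B)=\int_A\mathbbm{1}_B(X(t))\,d\llambda_d(t)$ for Borel sets $A\subset\rd$ and $B\subset\rmm$. Since $X$ is jointly measurable (condition (iv)), the map $(t,\omega)\mapsto\mathbbm{1}_B(X(t,\omega))$ is jointly measurable and nonnegative, so Tonelli's theorem gives
$$\mathbb E[\tau_X(A\times B)]=\int_A\mathbb E\big[\mathbbm{1}_B(X(t))\big]\,d\llambda_d(t)=\int_A P\big(X(t)\in B\big)\,d\llambda_d(t).$$
For $t\neq0$ the hypothesis yields $P(X(t)\in B)=\int_B p_t(x)\,d\llambda_m(x)$, while the single point $t=0$, where $X(0)=0$ almost surely and no density exists, is a $\llambda_d$-nullset and therefore does not contribute. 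Choosing a jointly measurable version $(t,x)\mapsto p_t(x)$ of these densities (which exists by a standard argument, since $t\mapsto P(X(t)\in\,\cdot\,)$ is a transition kernel) we arrive at $\mathbb E[\tau_X(A\times B)]=\int_{A\times B}p_t(x)\,d\llambda_d(t)\,d\llambda_m(x)$.

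Since the measurable rectangles $A\times B$ form a $\pi$-system generating $\mathcal B(\rd\times\rmm)$ and $\mathbb E[\tau_X]$ is $\sigma$-finite --- indeed locally finite, because $\tau_X\big((-r,r)^{d+m}\big)\le(2r)^d$ --- the uniqueness theorem for measures promotes this identity on rectangles to the statement $d\mathbb E[\tau_X](t,x)=p_t(x)\,d\llambda_d(t)\,d\llambda_m(x)$ on all of $\rd\times\rmm$ (with the density defined arbitrarily on the nullset $\{0\}\times\rmm$), which is the first assertion. For the second assertion, recall from Lemma~\ref{21om} that $\tau_X$ is $(U\oplus V,\det U)$-self-affine, so the operator in Definition~\ref{24bci} is $W=U\oplus V$. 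For any Borel set $A\subset[-1,1]^{d+m}\setminus W([-1,1]^{d+m})$, the first assertion together with the pointwise bound $p_t(x)\le C$ on this region yields
$$\mathbb E[\tau_X](A)=\int_A p_t(x)\,d\llambda_d(t)\,d\llambda_m(x)\le C\cdot\llambda_{d+m}(A),$$
which is precisely the b.c.i.\ condition with the constant $C$ from the hypothesis.

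There is essentially no serious obstacle here; the only points requiring some care are the interchange of expectation and integration (handled by nonnegativity via Tonelli), the harmless exceptional behaviour at $t=0$, and the routine selection of a jointly measurable version of the conditional densities together with the appeal to the uniqueness of $\sigma$-finite measures to pass from rectangles to the full density statement.
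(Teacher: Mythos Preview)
Your proof is correct and follows essentially the same route as the paper: apply Tonelli to $\mathbb E[\tau_X(A\times B)]=\int_A P\{X(t)\in B\}\,d\llambda_d(t)$ and substitute the density. You have simply been more careful than the paper, which records only the Tonelli computation and leaves the extension from rectangles, the $t=0$ nullset, and the one-line derivation of the b.c.i.\ bound implicit.
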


\begin{proof}
For any $A\in\mathcal B(\rd)$, $B\in\mathcal B(\rmm)$ by Tonelli's theorem we get
\begin{equation}\label{intmeasdens}\begin{split}
\mathbb{E} [ \tau_X](A\times B) & =\int_{\Omega}\int_A\mathbbm 1_B(X(t))\,d\llambda_d(t)\,dP\\
&=\int_AP\{X(t)\in B\}\,d\llambda_d(t)=\int_{A\times B}\,dP_{X(t)}(x)\,d\llambda_d(t)
\end{split}\end{equation}
and in case $dP_{X(t)}(x)=p_t(x)\,d\llambda_m(x)$ the assertion follows.
\end{proof}

Z\"ahle \cite{Z3} showed that under the b.c.i.\ condition there is a close relation between the carrying dimenison of occupation measures and the singular value function, which is frequently used as a tool in the study of the Hausdorff dimension of self-affine fractals; {e.g., see \cite{Fal1988, Fal1992, Fal2013}}. Following \cite{Fal1988}, let us briefly introduce the singular value function $\phi_W$ of a contracting, non-singular matrix $W \in \mathbb{R}^{n \times n}$. Let $1 > \alpha_1 \geq \alpha_2 \geq \ldots \geq \alpha_n >0$ denote the singular values of $W$, i.e.\ the positive square roots of the eigenvalues of $W^\top W$, where $W^\top$ denotes the transpose of $W$. Then the singular value function $\phi_W:(0,n]\to(0,\infty)$ of $W$ is given by
\begin{equation}\label{svfct}
\phi_W (s) = \alpha_1\cdot \alpha_2 \cdot\ldots \cdot\alpha_{m-1} \cdot\alpha_{m}^{s-m+1},
\end{equation}
where $m$ is the unique integer such that $m-1< s\leq m$. 

\begin{lemma}{\cite[Proposition 4.1]{Fal1988}} \label{23svf}
Let $W \in \mathbb{R}^{n \times n}$ be a contracting and non-singular matrix, $0<x<1$ and $\phi_W$ the singular value function of $W$. {Then} there exists a unique number $s = s(W,x)>0$ given by $x^{-1}(\phi_{W^k}(s))^{\frac{1}{k}}\to1$ as $k\to\infty$. Moreover, it holds that
$$s = \inf \left\{ r\in(0,n] : \lim_{k\to\infty}x^{-k}\phi_{W^k}(r)=0\right\}=\sup\left\{ r\in(0,n] : \lim_{k\to\infty}x^{-k}\phi_{W^k}(r)=\infty\right\}$$
with the convention that $\inf\emptyset=n$.
\end{lemma}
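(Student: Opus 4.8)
The plan is to reduce the statement to the behaviour of the exponential growth rate $\Psi(r):=\lim_{k\to\infty}\phi_{W^k}(r)^{1/k}$ and to show that, as a function of $r\in(0,n]$, this rate is continuous, strictly decreasing, tends to $1$ as $r\downarrow0$, and equals $|\det W|$ at $r=n$. The key input is that the singular value function is \emph{submultiplicative}, i.e.\ $\phi_{AB}(r)\le\phi_A(r)\phi_B(r)$ for every $r$; this follows from the classical bound $\prod_{i=1}^{j}\sigma_i(AB)\le\prod_{i=1}^{j}\sigma_i(A)\prod_{i=1}^{j}\sigma_i(B)$ for the ordered singular values $\sigma_1\ge\dots\ge\sigma_n$ of a matrix, together with the fact that $r\mapsto\log\phi_W(r)$ is the piecewise‑linear (hence log‑affine on each $[j-1,j]$) interpolant of $j\mapsto\log\prod_{i\le j}\sigma_i(W)$; see \cite{Fal1988}. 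Applying this with $A=W^k$ and $B=W^l$ makes $k\mapsto\log\phi_{W^k}(r)$ subadditive, so by Fekete's lemma $\frac1k\log\phi_{W^k}(r)\to\inf_k\frac1k\log\phi_{W^k}(r)$; since $\sigma_n(W^k)\ge\sigma_n(W)^k$ one has $\phi_{W^k}(r)\ge\sigma_n(W)^{kn}$, so the limit is finite and $\Psi(r)\in(0,1)$ is well defined. In particular $x^{-1}\phi_{W^k}(r)^{1/k}\to x^{-1}\Psi(r)$, so the number $s$ in the statement is precisely a solution of $\Psi(s)=x$.

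For the analytic properties of $\Psi$ I would use that for fixed $k$ the map $r\mapsto\log\phi_{W^k}(r)$ is concave and piecewise linear with right slope $\log\sigma_{\lceil r\rceil}(W^k)$, together with the chain $\sigma_n(W)^k\le\sigma_n(W^k)\le\sigma_i(W^k)\le\sigma_1(W^k)\le\sigma_1(W)^k<1$. After dividing by $k$, the functions $\frac1k\log\phi_{W^k}$ have all their slopes in the fixed interval $[\log\sigma_n(W),\log\sigma_1(W)]\subset(-\infty,0)$, uniformly in $k$; hence they are uniformly Lipschitz, so $\log\Psi$ is Lipschitz, in particular continuous on $(0,n]$ up to and including the endpoint, and it is strictly decreasing because its one‑sided slopes stay $\le\log\sigma_1(W)<0$. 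The end values come out directly: $\Psi(n)=|\det W|$ since $\phi_W(n)=|\det W|$ forces $\phi_{W^k}(n)=|\det W|^k$, while for $r\in(0,1]$ one has $\phi_{W^k}(r)=\|W^k\|^{r}$, so Gelfand's formula gives $\Psi(r)=\rho(W)^{r}$ with $\rho(W)<1$ the spectral radius of $W$, whence $\Psi(r)\to1$ as $r\downarrow0$.

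Finally I would read off the two displayed characterisations. Writing $g_k(r)=x^{-k}\phi_{W^k}(r)$, submultiplicativity in $k$ gives $g_{k+l}(r)\le g_k(r)\,g_l(r)$, so $g_k(r)^{1/k}\to x^{-1}\Psi(r)=:\Phi(r)$, and by a routine argument $g_k(r)\to0$ when $\Phi(r)<1$, $g_k(r)\ge\Phi(r)^k\to\infty$ when $\Phi(r)>1$, and $g_k(r)\ge1$ for all $k$ when $\Phi(r)=1$. Hence $\{r:\lim_kg_k(r)=0\}=\{r:\Phi(r)<1\}$ exactly, while $\{r:\lim_kg_k(r)=\infty\}$ lies between $\{r:\Phi(r)>1\}$ and $\{r:\Phi(r)\ge1\}$. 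Now $\Phi=x^{-1}\Psi$ is continuous and strictly decreasing with $\Phi(0^+)=x^{-1}>1$ and $\Phi(n)=|\det W|/x$, so by the intermediate value theorem: if $|\det W|\le x$ there is a unique $s\in(0,n]$ with $\Phi(s)=1$, equivalently $\Psi(s)=x$, equivalently $x^{-1}\phi_{W^k}(s)^{1/k}\to1$, and then $\{r:\Phi(r)<1\}=(s,n]$ and $\{r:\Phi(r)>1\}=(0,s)$, so both the infimum and the supremum in the statement equal $s$; if $|\det W|>x$ then $\Phi>1$ on all of $(0,n]$, so the first set is empty and the second is $(0,n]$, and with the convention $\inf\emptyset=n$ both sides agree again, equal to $n$. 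The main obstacles I anticipate are the bookkeeping that carries continuity and strict monotonicity from each $\phi_{W^k}$ over to the limit $\Psi$ via the uniform slope bounds, and the careful handling of the borderline locus $\{r:\Phi(r)=1\}$ — in particular checking that such $r$ can never lie in $\{r:\lim_kg_k(r)=0\}$, which is exactly what pins the infimum down.
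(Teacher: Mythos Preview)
The paper does not supply its own proof of this lemma; it is quoted verbatim from \cite[Proposition~4.1]{Fal1988}. Your argument is correct and is in fact essentially Falconer's original proof: the submultiplicativity $\phi_{AB}(r)\le\phi_A(r)\phi_B(r)$ (Falconer's Lemma~2.1) makes $k\mapsto\log\phi_{W^k}(r)$ subadditive, Fekete's lemma then yields the limit $\Psi(r)=\lim_k\phi_{W^k}(r)^{1/k}$, and the uniform slope bounds $\log\sigma_n(W)\le\tfrac1k\log\sigma_j(W^k)\le\log\sigma_1(W)<0$ give continuity and strict monotonicity of $\log\Psi$, from which the inf/sup identifications follow. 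One minor remark: the formulation ``a unique $s$ with $x^{-1}\phi_{W^k}(s)^{1/k}\to1$'' is only literally true when $|\det W|\le x$; in the remaining case the convention $\inf\emptyset=n$ forces $s=n$, and you have handled that correctly.
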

The following result of U.\ Z\"ahle \cite{Z3} will be important for our approach and states that under the b.c.i.\ condition the carrying dimension of the occupation measure of any self-affine random field can be calculated in terms of the singular value function.

\begin{theorem}{\cite[Theorem 5.3]{Z3}} \label{24cardim}
Let {$X=\{X(t)\}_{t\in\rd}$} be a $(U,V)$-self-affine random field {on $\rmm$} and $\tau_X$ its occupation measure. If $\tau_X$ satisfies the b.c.i.\ condition with respect to $W = U \oplus V$ then with probability one
$$ \cardim \tau_X = s (W, \det U) ,$$
where $s (W, \det U)$ is the unique number given {by} Lemma \ref{23svf}.
\end{theorem}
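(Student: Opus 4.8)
The plan is to prove the two bounds $\cardim\tau_X\le s$ and $\cardim\tau_X\ge s$ separately, writing $n:=d+m$, $W:=U\oplus V$ and $s:=s(W,\det U)$, and to feed both halves with the two-sided description of $s$ in Lemma \ref{23svf}: $(\det U)^{-k}\phi_{W^k}(r)\to0$ for $r>s$, while for $r<s$ it tends to $\infty$ (in fact geometrically, by submultiplicativity of the singular value function). The structural inputs are three. First, $\tau_X$ is $(W,\det U)$-self-affine (Lemma \ref{21om}), so the deterministic intensity scales as $\Exp[\tau_X](W^jA)=(\det U)^j\Exp[\tau_X](A)$. Second, the b.c.i.\ hypothesis bounds $\Exp[\tau_X]\le C\llambda_n$ on the fundamental annulus $A_0:=[-1,1]^n\setminus W([-1,1]^n)$, hence, iterating the scaling, by a constant times $(\det U)^j|\det W^j|^{-1}\llambda_n$ on $W^j(A_0)$. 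Third, $X$ has stationary increments, which lets one transport estimates from $0$ to a typical point of the graph. I would also use $[-1,1]^n\setminus\{0\}=\bigsqcup_{j\ge0}W^j(A_0)$, that $A_0$ is bounded away from $0$ since $0\in\operatorname{int}W([-1,1]^n)$, and that $\Exp[\tau_X](\{0\})=0$.

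For $\cardim\tau_X\ge s$ I would apply Lemma \ref{32lowerbound} with $\mu=\tau_X$ and $B=\rr^n$; it suffices to show that for every $\gamma<s$, with probability one, $\int_{\{\|z-w\|<1\}}\|z-w\|^{-\gamma}\tau_X(dz)<\infty$ for $\tau_X$-almost every $w$. Taking $w=(t,X(t))$ for fixed $t$, expanding the inner integral over the graph and using $X(t')-X(t)\stackrel{\rm d}{=}X(t'-t)$, the expectation of this integral equals $\int_{\{\|z\|<1\}}\|z\|^{-\gamma}\,d\Exp[\tau_X](z)$ by the intensity identity \eqref{intmeasdens}; a Fubini argument over $(t,\omega)$ then reduces everything to finiteness of this deterministic integral. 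Splitting $[-1,1]^n\setminus\{0\}=\bigsqcup_jW^j(A_0)$, changing variables on each piece and using the b.c.i.\ bound gives $\int_{W^j(A_0)}\|z\|^{-\gamma}\,d\Exp[\tau_X](z)\le C(\det U)^j|\det W^j|^{-1}\int_{W^j([-1,1]^n)}\|u\|^{-\gamma}\,du$. The elementary anisotropic estimate $\int_{W^j([-1,1]^n)}\|u\|^{-\gamma}\,du\asymp|\det W^j|\,\phi_{W^j}(\gamma)^{-1}$ — obtained by organising the integral over the $W^j$-ellipsoid along its semi-axes $\alpha_1(W^j)\ge\cdots\ge\alpha_n(W^j)$ and comparing with the definition \eqref{svfct} — then yields $\int_{W^j(A_0)}\|z\|^{-\gamma}\,d\Exp[\tau_X](z)\lesssim\big((\det U)^{-j}\phi_{W^j}(\gamma)\big)^{-1}$, and the series over $j$ converges for $\gamma<s$ by Lemma \ref{23svf}. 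Letting $\gamma\uparrow s$ gives the bound.

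For $\cardim\tau_X\le s$ I would take the $\tau_X$-full Borel set in Definition \ref{22cardim}(ii) to be the graph $\Gamma_X=\{(t,X(t)):t\in\rd\}$ and show $\dimh\Gamma_X\le s$ a.s. By the scaling relation (i), the graph of $X$ over $U^{-k}([-1,1]^d)$ has the same law as $W^{-k}\Gamma_X^{(0)}$, where $\Gamma_X^{(0)}$ is the graph over $[-1,1]^d$; since Hausdorff dimension is invariant under the bijective linear maps $W^{-k}$, and these graphs increase to $\Gamma_X$, it suffices to prove $\dimh\Gamma_X^{(0)}\le s$ a.s., i.e.\ (by $\sigma$-stability and letting $r\downarrow s$) that $\mathcal H^r(\Gamma_X^{(0)})=0$ a.s.\ for each $r>s$. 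For this I would tile $[-1,1]^d$ by $N_k\asymp(\det U)^{-k}$ translates of $U^k([-1,1]^d)$; over each tile the graph equals, up to a harmless translation, a copy of $W^k\Gamma_X^{(0)}$, which can be covered by boxes adapted to the singular values of $W^k$ with $r$-content $\lesssim(R\vee1)^r\phi_{W^k}(r)$, where $R$ is the (possibly infinite) vertical extent of $\Gamma_X^{(0)}$. Summing over the tiles produces covers of expected $r$-content $\lesssim(\det U)^{-k}\phi_{W^k}(r)\,\Exp[(R\vee1)^r]$, which tends to $0$ for $r>s$ by Lemma \ref{23svf} once the vertical extent is controlled; the contribution of atypically large vertical excursions is handled by a dyadic truncation of the fibres together with the b.c.i.\ bound on $\Exp[\tau_X]$ (which, via Chebyshev, controls the Lebesgue measure of the time-sets where $\|X\|$ is large) and a further use of (i). A subsequence/Borel--Cantelli argument then upgrades vanishing in expectation to $\mathcal H^r(\Gamma_X^{(0)})=0$ a.s. Combining the two bounds gives $\dim_\ast\tau_X=\dim^\ast\tau_X=s$, so $\cardim\tau_X$ exists and equals $s$ with probability one.

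I expect the main obstacle to be in the upper bound, precisely the control of the unbounded vertical extent of the graph of $X$ over small time-tiles: the b.c.i.\ condition bounds $\Exp[\tau_X]$ only inside $[-1,1]^n$ and says nothing directly about vertical tails, so one has to combine a dyadic decomposition of the fibres with the scaling relation (which makes large vertical excursions over shrinking time-domains progressively improbable) and with the b.c.i.\ bound, all while keeping the bookkeeping compatible with the singular value function so that Lemma \ref{23svf} still applies with the exponent exactly $s$. The remaining ingredients — the anisotropic integral, the summation of the geometric-type series, the tiling counts, and the passages from expectations to almost-sure statements via Fubini and Borel--Cantelli — are essentially routine, and the argument overall is a randomised, intensity-measure version of Falconer's singular-value-function estimate for self-affine sets.
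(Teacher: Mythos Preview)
The paper does not prove this theorem at all: it is quoted verbatim from Z\"ahle \cite[Theorem~5.3]{Z3} and used as a black box. So there is no ``paper's own proof'' to compare against; Z\"ahle's original argument works at the level of self-affine Palm-distributed random measures and does not pass through the graph of $X$.

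Your lower-bound half is essentially correct and is in fact the same mechanism the present paper exploits in the proofs of Theorem~\ref{31main} and Theorem~\ref{lbrange}: reduce via stationarity of increments (equivalently, the Palm identity) to the centred energy $\int_{\{\|z\|<1\}}\|z\|^{-\gamma}\,d\Exp[\tau_X](z)$, split $[-1,1]^n\setminus\{0\}$ into the annuli $W^j(A_0)$, and combine the b.c.i.\ bound with the scaling $\Exp[\tau_X](W^jA)=(\det U)^j\Exp[\tau_X](A)$ and the anisotropic integral estimate to obtain a geometric series controlled by $(\det U)^{-j}\phi_{W^j}(\gamma)$.

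Your upper-bound half, however, has a genuine gap that you yourself flag but do not close. Choosing $B=\Gamma_X$ in Definition~\ref{22cardim}(ii) forces you to bound $\dimh\Gamma_X^{(0)}$, and your covering argument needs the vertical extent $R=\sup_{t\in[-1,1]^d}\|X(t)\|$ to have controllable tails. The b.c.i.\ condition only bounds $\Exp[\tau_X]$ on $A_0\subset[-1,1]^n$; via Chebyshev it controls the \emph{Lebesgue measure of times} where $\|X(t)\|$ is large, not the supremum, and for fields without path continuity (e.g.\ operator semistable L\'evy processes, which the paper explicitly covers in Section~4.2) the supremum over a tile is not controlled by occupation-time bounds. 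Your proposed ``dyadic truncation of the fibres'' would leave an exceptional time-set of small Lebesgue measure but potentially full Hausdorff dimension, so the bookkeeping does not obviously close at exponent $s$. Z\"ahle's proof avoids this by working with the measure $\tau_X$ directly rather than with the support $\Gamma_X$: one constructs, for each $r>s$, a Borel set $B_r$ carrying full $\tau_X$-mass with $\mathcal H^r(B_r)=0$, using the self-affinity and b.c.i.\ of $\tau_X$ as a Palm measure; this never requires a uniform bound on $X$.
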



\section{Main results}

Throughout this section, let $X=\{X(t)\}_{t\in\rd}$ be a $(U,V)$-self-affine random field on $\rmm$ as introduced in Section 1 and denote by $\tau_X$ its occupation measure for the graph. Moreover, denote by
$$ \Gr X ([0,1]^d) = \big\{ \big( t, X(t) \big) : t \in [0,1]^d \big\} \subset \mathbb{R}^{d+m}$$
the graph of $X$ on the unit cube. We now show that, under a natural additional assumption, the carrying dimension of $\tau_X$ coincides with the Hausdorff dimension of the graph of $X$.

\begin{theorem} \label{31main}
Assume that 
\begin{equation}\label{3Frostmancondition}
\int_{[-1,1]^d}\mathbb{E} \big[ \big( \|t\| + \|X(t) \| \big) ^{-\gamma} \big] \,d\llambda_d(t) < \infty
\end{equation}
for any $\gamma < \dim_{\mathcal{H}} \Gr X ([0,1]^d)$.
Then with probability one the carrying dimension of $\tau_X$ exists and we have
\begin{equation*}
\cardim \tau_X = \dim_{\mathcal{H}} \Gr X ([0,1]^d).
\end{equation*}
\end{theorem}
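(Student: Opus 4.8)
The plan is to establish the two inequalities $\cardim\tau_X\ge\dimh\Gr X([0,1]^d)$ and $\dim^*\tau_X\le\dimh\Gr X([0,1]^d)$ separately, which together force $\dim_*\tau_X=\dim^*\tau_X$ and identify the common value; by the discussion following Definition~\ref{22cardim} this is exactly the assertion that $\cardim\tau_X$ exists and equals $\dimh\Gr X([0,1]^d)$. Both bounds should hold almost surely, so a little care is needed to deal with the exceptional null set, but since $\tau_X$ is a random measure and $\Gr X([0,1]^d)$ is a random set, this is a matter of bookkeeping.

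For the lower bound I would apply Lemma~\ref{32lowerbound} with $n=d+m$, $\mu=\tau_X$, and a suitable Borel set $B$. The key observation is that, because $\tau_X$ is concentrated on the graph of $X$, for a point $z=(t,X(t))$ and a radius $r<1$ the local integral $\int_{\{\|w-z\|<1\}}\|w-z\|^{-\gamma}\,\tau_X(dw)$ can be rewritten, via the very definition of the occupation measure, as an integral over those $s$ with $(s,X(s))$ near $(t,X(t))$, i.e.\ roughly $\int \big(\|s-t\|+\|X(s)-X(t)\|\big)^{-\gamma}\,d\llambda_d(s)$ up to constants and the truncation. Taking expectations and using the stationarity of increments (condition~(ii)) to reduce $X(s)-X(t)$ to $X(s-t)$, together with a Tonelli/Fubini argument, this quantity has finite expectation precisely when the Frostman-type hypothesis \eqref{3Frostmancondition} holds for the exponent $\gamma$. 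Hence for every $\gamma<\dimh\Gr X([0,1]^d)$ the hypothesis of Lemma~\ref{32lowerbound} is satisfied for $\tau_X$-almost every $x$ in an appropriate $B$ of full $\tau_X$-measure (after passing to a full-measure event, using that an $L^1$ function is almost surely finite), and Lemma~\ref{32lowerbound} yields $\cardim\tau_X\ge\gamma$. Letting $\gamma\uparrow\dimh\Gr X([0,1]^d)$ gives the desired lower bound.

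For the upper bound I would exploit that $\tau_X$ is, by construction, carried by the graph of $X$ over all of $\rd$, and that $\rd=\bigcup_{z\in\ganz^d}(z+[0,1]^d)$ is a countable union of unit cubes. The occupation measure restricted to $(z+[0,1]^d)\times\rmm$ is carried by $\Gr X(z+[0,1]^d)$, which by stationarity of increments (applied to the shifted field $t\mapsto X(t+z)-X(z)$) has the same distribution of Hausdorff dimension as $\Gr X([0,1]^d)$; countable stability of Hausdorff dimension then shows that $\tau_X$ is carried by a Borel set of Hausdorff dimension at most $\dimh\Gr X([0,1]^d)$ almost surely. This gives $\dim^*\tau_X\le\dimh\Gr X([0,1]^d)$ a.s., and combined with the trivial bound $\dimh\Gr X([0,1]^d)\le\dim_*\tau_X$ — which holds because $\tau_X\big(\Gr X([0,1]^d)\big)=\llambda_d([0,1]^d)=1>0$ so any Borel set of positive $\tau_X$-measure intersected suitably forces dimension at least that of the infimum, and more directly because $\Gr X([0,1]^d)$ itself receives positive $\tau_X$-mass — we obtain $\dim_*\tau_X=\dim^*\tau_X=\dimh\Gr X([0,1]^d)$.

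The main obstacle I anticipate is the lower bound, specifically the passage from the abstract local-integrability condition in Lemma~\ref{32lowerbound} to the concrete condition \eqref{3Frostmancondition}. One must handle three technical points carefully: (a) the truncation $\{\|w-z\|<1\}$ versus the unbounded domain in \eqref{3Frostmancondition}, which is harmless since the contribution from $\|s-t\|+\|X(s)-X(t)\|\ge\varepsilon$ is bounded by a constant times $\llambda_d$ of a bounded set; (b) showing that the map $(t,\omega)\mapsto\int_{\{\|w-(t,X(t,\omega))\|<1\}}\|w-(t,X(t,\omega))\|^{-\gamma}\,\tau_X(dw,\omega)$ is jointly measurable and that its expectation is controlled by \eqref{3Frostmancondition}, so that Tonelli applies; and (c) choosing the set $B$ in Lemma~\ref{32lowerbound} and the full-probability event simultaneously for a sequence $\gamma_k\uparrow\dimh\Gr X([0,1]^d)$. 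Measurability of $\Gr X$ and of $\tau_X$ in condition~(iv) and Lemma~\ref{21om} should make (b) routine but not entirely trivial.
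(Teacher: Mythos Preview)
Your lower-bound argument is correct and takes a genuinely different route from the paper's. Both feed into Lemma~\ref{32lowerbound}, but to verify its hypothesis the paper exploits that $\tau_X$ is Palm distributed (Lemma~\ref{21om}): it proves the single centred estimate $\mathbb{E}\big[\int_{\{\|z\|<1\}}\|z\|^{-\gamma}\,d\tau_X(z)\big]<\infty$ via the intensity formula~\eqref{intmeasdens}, then invokes \cite[Lemma~3.3]{Z1} to transfer this $Q^0$-a.s.\ statement at the origin to a $Q$-a.s.\ statement at $\mu$-a.e.\ $x$, and finally uses $P_{\tau_X}\ll Q$. You bypass the Palm machinery entirely by bounding the double energy integral directly: writing $\int_{\{\|w-z\|<1\}}\|w-z\|^{-\gamma}\,d\tau_X(w)$ at $z=(t,X(t))$ as an integral in $s$, applying Tonelli, and using the marginal stationary-increment hypothesis $X(s)-X(t)\stackrel{\rm d}{=}X(s-t)$ to reduce to~\eqref{3Frostmancondition}. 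Your route is more elementary and self-contained; the paper's keeps the argument inside Z\"ahle's general framework for self-affine random measures.

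Two small issues in your upper-bound paragraph. First, the countable-cube argument needs the process-level equality $\{X(t+z)-X(z)\}_{t}\stackrel{\rm fd}{=}\{X(t)\}_{t}$, whereas condition~(ii) only assumes the one-dimensional equality $X(t)-X(s)\stackrel{\rm d}{=}X(t-s)$; this can be repaired by using self-affinity instead (the sets $U^{-n}[-1,1]^d$ exhaust $\rd$, and $\Gr X(U^{-n}[-1,1]^d)$ is in law a fixed linear image of $\Gr X([-1,1]^d)$, hence has the same Hausdorff dimension). Second, your ``trivial bound $\dimh\Gr X([0,1]^d)\le\dim_*\tau_X$'' has the inequality backwards: $\tau_X(\Gr X([0,1]^d))>0$ gives $\dim_*\tau_X\le\dimh\Gr X([0,1]^d)$, which is the easy direction. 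The inequality $\dim_*\tau_X\ge\dimh\Gr X([0,1]^d)$ is precisely the non-trivial lower bound from your second paragraph, not a triviality. The paper, for its part, does not argue $\dim^*\tau_X\le\dimh\Gr X([0,1]^d)$ at all; its Remark simply notes that the upper bound on $\cardim\tau_X$ is immediate from Definition~\ref{22cardim}(i).
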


\begin{remark}
Note that by the definition of the carrying dimension, the upper bound
$\cardim \tau_X \leq \dim_{\mathcal{H}} \Gr X ([0,1]^d)$ almost surely 
is immediate. Thus we only need to proof the lower bound. Further note that by stationarity of the increments, \eqref{3Frostmancondition} is equivalent to 
$$\int_{[0,1]^d \times [0,1]^d} \mathbb{E} \big[ \big( \|t-s\| + \|X(t) - X(s) \| \big) ^{-\gamma} \big]\,d\llambda_d(t)\,d\llambda_d(s) < \infty$$
and by Frostman's Theorem \cite{Fal, Kahane, Mattila} this implies that $\dim_{\mathcal{H}} \Gr X ([0,1]^d) \geq \gamma$ almost surely, which is the canonical tool to derive a lower bound of the Hausdorff dimension. Furthermore, the above integral is the expected value of the $\gamma$-energy of $\tau_X$, usually denoted $I_\gamma ( \tau_X)$. By Frostman's lemma \cite{Fal,Mattila} almost surely there exists a (random) probability distribution $\mu$ on $\Gr X ([0,1]^d)$ such that $I_\gamma (\mu) < \infty$ if $\gamma < \dim_{\mathcal{H}} \Gr X ([0,1]^d)$. However, in general one does not have the information that $\mu = \tau_X$, although $\mu = \tau_X$ is the canonical candidate for the derivation of a lower bound.
\end{remark}

\begin{proof}[Proof of Theorem \ref{31main}]
As remarked above, we only need to prove the lower bound
\begin{equation}\label{3proof1}
\cardim \tau_X \geq \dim_{\mathcal{H}} \Gr X ([0,1]^d) \quad \text{ almost surely.}
\end{equation}
By Lemma \ref{21om}, $\tau_X$ is Palm distributed and thus there exists a stationary quasi-distribution $Q$ with intensity constant $c_Q$ and Palm measure $Q^0$ given by \eqref{21palm} such that $P_{\tau_X} = c_Q^{-1}Q^0$ as in \eqref{21palma}. Moreover, to prove \eqref{3proof1}, by Lemma \ref{32lowerbound} it suffices to show that for any $\gamma<\dim_{\mathcal{H}} \Gr X ([0,1]^d)$ we have
\begin{equation}\label{suff1}
\int_{ \{\| z-x \| <1\} } \| z-x \|^{-\gamma} \,d\mu(z) < \infty 
\end{equation}
for $P_{\tau_X}$-almost all $\mu$ and $\mu$-almost all $x \in \Gr X ([0,1]^d)$.
If we can show that
\begin{equation}\label{3proof3}
\mathbb{E} \Big[ \int_{ \{\| z \|<1\} } \| z \|^{-\gamma} \,d\tau_X (z) \Big] < \infty
\end{equation}
then for $Q^0$-almost all $\mu$ we have
$$ \int_{ \{\| z \| <1\} } \| z \|^{-\gamma} \,d\mu(z) < \infty, $$
which by \cite[Lemma 3.3]{Z1} is equivalent to
$$ \int_{ \{\| z-x \| <1\} } \| z-x \|^{-\gamma} \,d\mu(z) < \infty $$
for $Q$-almost all $\mu$ and $\mu$-almost all $x \in \Gr X ([0,1]^d)$. Since $Q^0\ll Q$ by \eqref{21palm} and thus $P_{\tau_X}\ll Q$ by \eqref{21palma}, it follows that \eqref{suff1} holds for $P_{\tau_X}$-almost all $\mu$ and $\mu$-almost all $x \in \Gr X ([0,1]^d)$. Thus it suffices to show \eqref{3proof3}. By \eqref{intmeasdens} and our assumption \eqref{3Frostmancondition} we get for some constant $0<K<\infty$
\begin{align*}
\mathbb{E} \Big[ \int_{ \{\| z \| <1\} } \| z \|^{-\gamma} \,d\tau_X (z) \Big] & = \int_{ \{\| z \| <1\} } \| z \|^{-\gamma} \,d\mathbb E[\tau_X](z)\\
& =\int_{ \{\| (t,x) \| <1\} } \| (t,x) \|^{-\gamma} \,dP_{X(t)}(x)\,d\llambda_d(t)\\
& \leq K\int_{[-1,1]^d}\int_{\rmm}(\|t\|+\|x\|)^{-\gamma}\,dP_{X(t)}(x)\,d\llambda_d(t)\\
& =K \int_{[-1,1]^d}\mathbb{E} \big[ \big( \|t\| + \|X(t) \| \big) ^{-\gamma} \big] \,d\llambda_d(t)<\infty,
\end{align*}
which shows \eqref{3proof3} and concludes the proof.
\end{proof}

Combining Theorem \ref{31main} with Theorem \ref{24cardim} we immediately get the following result.

\begin{cor} \label{33computation}
Assume \eqref{3Frostmancondition} is fulfilled and $\tau_X$ satisfies the b.c.i.\ condition. Then with probability one
$$ \dim_{\mathcal{H}} \Gr X ([0,1]^d) = s (W, \det U) ,$$
where $W = U \oplus V$ and $s (W, \det U)$ is the unique number given by Lemma \ref{23svf}.
\end{cor}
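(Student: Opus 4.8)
The plan is to obtain the identity by chaining together the two dimension formulas already at our disposal. On the one hand, Theorem~\ref{31main} identifies the carrying dimension of the occupation measure with the Hausdorff dimension of the graph; on the other hand, Theorem~\ref{24cardim} (due to U.\ Z\"ahle) identifies the carrying dimension with the singular value index $s(W,\det U)$. Since the corollary assumes both \eqref{3Frostmancondition} and the b.c.i.\ condition with respect to $W = U\oplus V$, the hypotheses of both theorems are met verbatim, so the argument should amount to little more than transitivity of equality on a full-probability event.

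Concretely, I would first invoke Lemma~\ref{21om} to note that $\tau_X$ is a $(U\oplus V,\det U)$-self-affine random measure, so that the b.c.i.\ condition relative to $W$ makes sense and Theorem~\ref{24cardim} is applicable to it; this is the only compatibility check that needs recording. Then I would apply Theorem~\ref{31main}: on an event $\Omega_1$ with $P(\Omega_1)=1$, the carrying dimension $\cardim\tau_X$ exists and equals $\dim_{\mathcal H}\Gr X([0,1]^d)$. Next I would apply Theorem~\ref{24cardim}: on an event $\Omega_2$ with $P(\Omega_2)=1$, one has $\cardim\tau_X = s(W,\det U)$, the deterministic number furnished by Lemma~\ref{23svf}. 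On $\Omega_1\cap\Omega_2$, which again has probability one, both identities hold simultaneously, and hence $\dim_{\mathcal H}\Gr X([0,1]^d) = s(W,\det U)$ there.

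There is no genuine obstacle here; the proof is essentially a one-line deduction. The only two points that deserve a moment's care are (a) that each of the two constituent results holds merely \emph{almost surely}, so one must intersect the two full-probability events rather than reason pathwise for a fixed $\omega$, and (b) that Theorem~\ref{24cardim} is legitimately applied to $\tau_X$ only because Lemma~\ref{21om} supplies the self-affinity (and Palm) structure it presupposes. No circularity is introduced, since Theorem~\ref{31main} and Theorem~\ref{24cardim} are established by independent arguments.
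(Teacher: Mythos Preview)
Your proposal is correct and matches the paper's approach exactly: the paper simply states that the corollary follows immediately by combining Theorem~\ref{31main} with Theorem~\ref{24cardim}, which is precisely the chaining argument you spell out. Your additional remarks about intersecting the two full-probability events and invoking Lemma~\ref{21om} for the self-affinity structure are appropriate bookkeeping but not substantively different from what the paper intends.
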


In case the contracting, non-singular operators $U$ and $V$ are given by exponential matrices $U = c^E$ and $V=c^D$ for some $c\in(0,1)$ and some matrices $E\in\rr^{d\times d}$ and $D\in\rr^{m\times m}$ with positive real parts of their eigenvalues, we are able to calculate $s (U \oplus V, \det U)$ of Lemma \ref{23svf} explicitly in terms of the real parts of the eigenvalues of $E$ and $D$ as follows.

\begin{example}\label{sexpl}
Let $0<a_1\leq\cdots\leq a_d$ and $0<\lambda_1\leq\cdots\leq \lambda_m$ denote the real parts of the eigenvalues of $E$, respectively $D$. Write $0<\gamma_1\leq\cdots\leq \gamma_{d+m}$ for the union of all these quantities in a common order. Then we have
$$\det U=\det c^E=c^q,$$
where $q=\trace(E)=a_1+\cdots+a_d$. For the block-diagonal matrix $W=c^E\oplus c^D=c^{E\oplus D}$ we obtain that the positive square roots of the eigenvalues of the symmetric matrices $(W^n)^\top W^n$ asymptotically behave as $c^{n\gamma_j}$ for $j=1,\ldots,d+m$. More precisely, for any $\varepsilon>0$ the $j$-th smallest square root $\eta_n(j)$ of the eigenvalues of  $(W^n)^\top W^n$ fulfills
\begin{equation}\label{jsev}
c^{n(\gamma_j+\varepsilon)}\leq\eta_n(j)\leq c^{n(\gamma_j-\varepsilon)}
\end{equation}
for all $j=1,\ldots,d+m$ and $n\in\nat$ large enough; e.g.\ see section 2.2 in \cite{MS} for details. Now let $r\in\{1,\ldots, d+m\}$ be the unique integer such that
\begin{equation}\label{defr}
\sum_{j=1}^{r-1}\gamma_j<q\leq\sum_{j=1}^{r}\gamma_j
\end{equation}
then by \eqref{jsev} for any $r-1<s\leq r$ the singular value functon $\phi_{W^n}(s)$ in the above sense asymptotically behaves as 
$$c^{n\gamma_1}\cdots c^{n\gamma_{r-1}}c^{n\gamma_{r}(s-r+1)} $$
and a comparison with $(\det U)^n=c^{nq}$ together with \eqref{defr} readily shows that
\begin{equation}\label{specials}
s(c^{E\oplus D},c^q)=r-1+\frac1{\gamma_{r}}\,\Big(q-\sum_{j=1}^{r-1}\gamma_j\Big).
\end{equation}
Note that if $q=\sum_{j=1}^{r}\gamma_j$ then $s(W,\det U)=r$ which shows that $s(W,\det U)\geq d$. Note further that the right-hand side of \eqref{specials} is independent of $c\in(0,1)$ and only depends on the real parts of the eigenvalues of the scaling exponents $E$ and $D$.
\end{example}

\vspace*{2ex}
We now turn to the range $X([0,1]^d)=\{X(t):t\in[0,1]^d\}$ of the self-affine random field $X$ and its (random) occupation measure $\sigma_X:\Omega\to\mathcal M_{\rmm}$ defined as in \cite{Khosh} by
$$\sigma_X(B)=\int_{[0,1]^d}\mathbbm 1_B(X(t))\,d\llambda_d(t)=\lambda_d\{t\in[0,1]^d:X(t)\in B\}=\tau_X([0,1]^d\times B)$$
for $B\in\mathcal B(\rmm)$. Note that $\sigma_X$ is supported on the random set $X([0,1]^d)$ and has intensity measure
$$\Exp[\sigma_X](B)=\int_{[0,1]^d}P\{X(t)\in B\}\,d\llambda_d(t),$$
which in case $d=1$ is also called the expected sojourn time of $X$ on $[0,1]$ in the Borel set $B\in\mathcal B(\rmm)$. It is easy to see that for $B\in\mathcal B(\rmm)$ we have
$$\sigma_X(V^{-1}(B))\stackrel{\rm d}{=}(\det U)^{-1}\int_{U([0,1]^d)}\mathbbm 1_B(X(t))\,d\llambda_d(t),$$
which shows that $\sigma_X$ is not $(V,\det U)$-self-affine, since $[0,1]^d$ is not $U$-invariant. Hence an approach analogous to Theorem \ref{31main} for the graph, combining the Hausdorff dimension of the range with the carrying dimension of $\sigma_X$, fails; cf.\ also \cite{SX}. Nevertheless, if the carrying dimension of $\sigma_X$ exists, then obviously $\cardim\sigma_X\leq\cardim\tau_X$. Furthermore, it seems quite natural that the Hausdorff dimension of the range of the self-affine random field is connected to $s(V,\det U)$. In case $U=c^E$ and $V=c^D$ are given by exponential matrices for some $c\in(0,1)$ as above, we will now show that $s(V,\det U)=s(c^D,c^q)$ with $q=\trace(E)$ always serves as a lower bound for $\dim_{\mathcal H}X([0,1]^d)$ with probability one, provided that the b.c.i.\ condition for the occupation measure $\tau_X$ of the graph is fulfilled. We will first calculate $s(c^D,c^q)$ explicitly in terms of the real parts of the eigenvalues of $E$ and $D$.

\begin{example}\label{svalrange}
Let $0<a_1\leq\cdots\leq a_d$ denote the real parts of the eigenvalues of $E$ and let $0<\lambda_1<\cdots< \lambda_p$ be the distinct real parts of the eigenvalues of $D$ with multiplicities $m_1,\ldots, m_p$. Then $q=\trace(E)=\sum_{k=1}^da_k$ and we distinguish between the following two cases.

{\bf Case 1:} If for some $\ell\in\{1,\ldots,p\}$ we have
\begin{equation}\label{c1cond}
\sum_{i=1}^{\ell-1} \lambda_i m_i<q\leq\sum_{i=1}^{\ell} \lambda_i m_i
\end{equation}
then there exists $R\in\{1,\ldots, m_{\ell}\}$ such that
\begin{equation}\label{c1condref}
\sum_{i=1}^{\ell-1} \lambda_i m_i+\lambda_{\ell}(R-1)<q\leq\sum_{i=1}^{\ell-1} \lambda_i m_i+\lambda_{\ell}R
\end{equation}
so that for $W=c^D$ we have $r=\sum_{i=1}^{\ell-1} m_i+R$
in \eqref{defr}.  From \eqref{specials} it follows that
\begin{equation}\label{specialsc1}\begin{split}
s(c^D,c^q) & =\sum_{i=1}^{\ell-1} m_i+R-1+\frac1{\lambda_{\ell}}\Big(q-\sum_{i=1}^{\ell-1} \lambda_i m_i-\lambda_{\ell}(R-1)\Big)\\
& =\frac1{\lambda_{\ell}}\Big(q+\sum_{i=1}^{\ell} (\lambda_{\ell}-\lambda_i) m_i\Big).
\end{split}\end{equation}
Combining \eqref{specialsc1} with the second inequality in \eqref{c1cond} we see that
\begin{equation}\label{ubsv}
s(c^D,c^q) \leq\sum_{i=1}^{\ell} m_i\leq m.
\end{equation}

{\bf Case 2:} If $q>\sum_{i=1}^p\lambda_i m_i$ we choose $\varepsilon >0$ such that $q>\sum_{i=1}^p(\lambda_i+\varepsilon)m_i$ then by \eqref{jsev} we get as $n\to\infty$
$$c^{-nq}\phi_{c^{nD}}(m)\geq c^{-n(q-\sum_{i=1}^p(\lambda_i+\varepsilon)m_i)}\to\infty,$$
showing that $s(c^D,c^q)=m$.

Altogether, we have shown that irrespectively of $c\in(0,1)$ we have
\begin{equation}\label{srange}
s(c^D,c^q)=\begin{cases} \displaystyle\frac1{\lambda_{\ell}}\Big(q+\displaystyle\sum_{i=1}^\ell (\lambda_{\ell}-\lambda_i) m_i\Big) & \text{ , if }\displaystyle\sum_{i=1}^{\ell-1} \lambda_i m_i<q\leq\displaystyle\sum_{i=1}^{\ell} \lambda_i m_i\\
\,m & \text{ , else.}\end{cases}
\end{equation}
\end{example}

\begin{theorem}\label{lbrange}
Let $X=\{X(t)\}_{t\in\rd}$ be a $(c^E,c^D)$-self-affine random field on $\rmm$ for some $c\in(0,1)$ such that its occupation measure $\tau_X$ of the graph satisfies the b.c.i.\ condition. Then with probability one we have
$$ \dim_{\mathcal{H}}X ([0,1]^d)\geq s(c^D,c^q),$$
where $q=\trace(E)$ and $s(c^D,c^q)$ is given by \eqref{srange}.
\end{theorem}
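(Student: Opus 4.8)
The plan is to verify the hypothesis of Lemma~\ref{32lowerbound} for the occupation measure $\sigma_X$ of the range. Since $\sigma_X$ is carried by $X([0,1]^d)$ and $\sigma_X\big(X([0,1]^d)\big)=\llambda_d([0,1]^d)=1>0$, once we know that for a fixed $\gamma<s(c^D,c^q)$ the potential $\int_{\{\|z-x\|<1\}}\|z-x\|^{-\gamma}\,\sigma_X(dz)$ is finite for $\sigma_X$-almost all $x$, almost surely, Lemma~\ref{32lowerbound} gives $\dimh X([0,1]^d)\geq\gamma$ a.s.; letting $\gamma\uparrow s(c^D,c^q)$ then yields the theorem. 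Writing the potential as $\int_{[0,1]^d}\mathbbm{1}\{\|X(t)-X(s)\|<1\}\,\|X(t)-X(s)\|^{-\gamma}\,d\llambda_d(t)$ with $x=X(s)$, taking expectations, applying Tonelli, using the stationarity of the increments ($X(t)-X(s)\stackrel{\rm d}{=}X(t-s)$) and the substitution $u=t-s$, everything reduces to the single deterministic estimate
$$\int_{[-1,1]^d}\Exp\big[\|X(u)\|^{-\gamma}\,\mathbbm{1}\{\|X(u)\|<1\}\big]\,d\llambda_d(u)<\infty\qquad\text{for }\gamma<s(c^D,c^q),$$
because finiteness of this integral forces $\int_{[0,1]^d}\mathbbm{1}\{\|X(t)-X(s)\|<1\}\,\|X(t)-X(s)\|^{-\gamma}\,d\llambda_d(t)<\infty$ for $\llambda_d$-a.a.\ $s$, $P$-a.s., which is the required potential bound.

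To estimate the displayed integral I would fix norms adapted to $E$ on $\rd$ and to $D$ on $\rmm$ (so that $c^E$, resp.\ $c^D$, strictly contracts the corresponding closed unit ball $B$), and decompose $[-1,1]^d$ into the annuli $c^{kE}A_0$, $k\geq -k_0$, where $A_0$ is the ``scale one'' annulus of the $E$-adapted ball; the finitely many annuli with $k<0$ lie at scales bounded below, carry no singularity, and contribute a finite amount. On $c^{kE}A_0$ the substitution $u=c^{kE}v$, $v\in A_0$, together with $\llambda_d(du)=(\det c^E)^k\,\llambda_d(dv)=c^{kq}\,\llambda_d(dv)$ and the scaling relation $X(c^{kE}v)\stackrel{\rm d}{=}c^{kD}X(v)$ turns the corresponding piece into $c^{kq}\int_{A_0}\Exp\big[\|c^{kD}X(v)\|^{-\gamma}\,\mathbbm{1}\{\|c^{kD}X(v)\|<1\}\big]\,d\llambda_d(v)$. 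Since $v$ stays bounded away from $0$, the b.c.i.\ condition together with \eqref{intmeasdens} gives a density for $X(v)$ bounded by a constant $C$ on the unit ball; iterating via the self-affinity of $\tau_X$ it moreover gives a density for $X(c^{kE}v)$ bounded by $c^{-i\,\trace D}C$ on the $i$-th annulus of the $D$-adapted ball, for every $0\leq i\leq k$. Splitting the inner expectation according to whether $\|X(v)\|<1$ or $\|X(v)\|\geq1$, the ``near'' part is $\leq C\int_{\{\|x\|<1\}}\|c^{kD}x\|^{-\gamma}\,d\llambda_m(x)$; and since the truncation forces $c^{kD}X(v)$ into the \emph{bounded} set $B\setminus c^{kD}B$ when $\|X(v)\|\geq1$, the ``far'' part, bounded on each of the annuli $i=0,\dots,k$ by the scaled density estimate, is $\leq\sum_{i=0}^{k}C\int_{\{\|x\|<1\}}\|c^{iD}x\|^{-\gamma}\,d\llambda_m(x)$.

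Summing the near parts over $k$ produces, up to a constant, the series $\sum_{k\geq0}c^{kq}\int_{\{\|x\|<1\}}\|c^{kD}x\|^{-\gamma}\,d\llambda_m(x)$. Evaluating $\int_{\{\|x\|<1\}}\|c^{kD}x\|^{-\gamma}\,d\llambda_m(x)$ explicitly in terms of the singular values of $c^{kD}$, which decay at the exponential rates fixed by the real parts of the eigenvalues of $D$ (cf.\ \eqref{jsev}), makes this a geometric series whose convergence is equivalent, by the same comparison with $(\det U)^k=c^{kq}$ carried out in Example~\ref{svalrange}, precisely to $\gamma<s(c^D,c^q)$. For the far parts, interchanging the order of summation gives $\sum_{k\geq0}c^{kq}\sum_{i=0}^k(\cdots)=\sum_{i\geq0}(\cdots)\sum_{k\geq i}c^{kq}=(1-c^q)^{-1}\sum_{i\geq0}c^{iq}(\cdots)$, i.e.\ a constant multiple of the near series, and hence it converges under the very same condition. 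This yields the displayed estimate and, with it, the theorem.

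The crux, and the reason the boundedly continuous intensity condition alone is enough, is the ``far'' part. By itself the b.c.i.\ condition only controls the law of $X(t)$ near the origin at the scale set by $t$, so a priori it says nothing about $\Exp[\|X(u)\|^{-\gamma}]$ when $\|X(u)\|$ is large relative to $\|u\|$. What makes the argument go through is the combination of the truncation $\{\|z-x\|<1\}$ inherited from Lemma~\ref{32lowerbound}, which confines this contribution to a bounded region, with the self-affinity of $\tau_X$, which propagates the b.c.i.\ density bound across all intermediate scales between $\|u\|$ and $1$; the interchange of summations then shows these leftover pieces cost no more than the main term. The only other point requiring care is that the sets $c^{kE}[-1,1]^d$ and $c^{kD}[-1,1]^m$ entering the b.c.i.\ condition are not exactly nested, which is handled by passing to the adapted norms and affects constants only.
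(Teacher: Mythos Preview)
Your argument is correct and shares the paper's overall strategy---reduce to a Frostman-type energy estimate, decompose into scales, apply the b.c.i.\ bound, and control the resulting series via the singular values of $c^{kD}$---but the decomposition is organized differently. The paper decomposes \emph{jointly} in $(t,x)$ via the annuli $c^{j(E\oplus D)}(A)$ with $A=[-1,1]^{d+m}\setminus c^{E\oplus D}([-1,1]^{d+m})$, so that after a single change of variables the integral lands directly in the b.c.i.\ region and the bound $d\mathbb E[\tau_X]\leq C\,d\llambda_{d+m}$ applies at once; no near/far split is needed. Your $t$-only decomposition forces the extra split according to the size of $X(v)$ and the iteration of the b.c.i.\ bound across the intermediate $x$-scales $i=0,\ldots,k$; after the interchange of summations you recover exactly the same series $\sum_{k\geq0}c^{kq}\int_{\{\|x\|<1\}}\|c^{kD}x\|^{-\gamma}\,d\llambda_m(x)$, so the additional bookkeeping costs nothing. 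Two minor remarks: the b.c.i.\ condition bounds the intensity measure $\mathbb E[\tau_X]$, not a pointwise density $p_v$ for each individual $v$, so your density statements are cleaner when phrased at the measure level (as the paper does); and your one-line appeal to singular values for the convergence of the series does in fact require the spectral decomposition of $D$ and the case distinction of Example~\ref{svalrange}, which the paper works out explicitly. Conversely, your remark about passing to adapted norms to ensure nesting of the scaled cubes is a genuine technical point that the paper's disjoint-covering step also relies on but leaves implicit.
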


\begin{proof}
By Frostman's Theorem \cite{Fal, Kahane, Mattila} it suffices to show that for any $\gamma<s(c^D,c^q)$ we have
\begin{equation}\label{suff}
\int_{[0,1]^d \times [0,1]^d} \mathbb{E} \big[ \|X(t) - X(s) \|^{-\gamma} \big]\,d\llambda_d(t)\,d\llambda_d(s) < \infty.
\end{equation}

Let $0<\lambda_1<\cdots< \lambda_p$ be the distinct real parts of the eigenvalues of $D$ with multiplicities $m_1,\ldots, m_p$. We will use the spectral decomposition with respect to $D$ as laid out in \cite{MS}. According to this, in an appropriate basis of $\rmm$, we can decompose $\rmm=V_1\oplus\cdots\oplus V_{p}$ into mutually orthogonal subspaces $V_i$ of dimension $m_i$ such that each $V_i$ is $D_i$-invariant, where the real part of any eigenvalue of $D_i$ is equal to $\lambda_i$ and $D=D_1\oplus\cdots\oplus D_{p}$ is block-diagonal. With respect to this spectral decompositon we may write $x\in\rmm$ as $x=x_1+\cdots+x_p$ with $x_i\in V_i\simeq\rr^{m_i}$ and we have $\|x\|^2=\|x_1\|^2+\cdots+\|x_p\|^2$ in the associated euclidean norms.

Now let $A=[-1,1]^{d+m}\setminus c^{E\oplus D}([-1,1]^{d+m})$ then $\bigcup_{j=0}^\infty c^{j(E\oplus D)}(A)=[-1,1]^{d+m}\setminus\{0\}$ is a disjoint covering. Using a change of variables $(x,t)=(c^{jD}y,c^{jE}s)$ together with the self-affinity of the random field and the b.c.i.\ condition, we get for some unspecified constant $K>0$ and every $\ell\in\{1,\ldots,p\}$
\begin{align*}
& \int_{[0,1]^d \times [0,1]^d} \mathbb{E} \big[ \|X(t) - X(s) \|^{-\gamma} \big]\,d\llambda_d(t)\,d\llambda_d(s)\\
& \quad\leq\int_{[-1,1]^d} \mathbb{E} \big[ \|X(t)\|^{-\gamma} \big]\,d\llambda_d(t)\\
& \quad\leq\int_{[-1,1]^d}\int_{[-1,1]^m}  \|x\|^{-\gamma}\,dP_{X(t)}(x)\,d\llambda_d(t)+2^d\\
& \quad=\sum_{j=0}^\infty\int_{c^{j(E\oplus D)}(A)}  \|x\|^{-\gamma}\,dP_{X(t)}(x)\,d\llambda_d(t)+2^d\\
&\quad=\sum_{j=0}^\infty c^{jq}\int_{A} \|c^{jD}y\|^{-\gamma}\,dP_{X(s)}(y)\,d\llambda_d(s)+2^d\\
&\quad=\sum_{j=0}^\infty c^{jq}\int_{A}\|c^{jD}y\|^{-\gamma} \,d\Exp[\tau_X](s,y)+2^d\\
&\quad\leq K\sum_{j=0}^\infty c^{jq}\int_{A}\|c^{jD}y\|^{-\gamma} \,d\llambda_{d+m}(s,y)+2^d\\
&\quad\leq K\sum_{j=0}^\infty c^{jq}\int_{[-1,1]^{m_1}}\cdots\int_{[-1,1]^{m_{\ell}}} \frac{d\llambda_{m_1}(x_1)\cdots d\llambda_{m_{\ell}}(x_{\ell})}{\big(\sum_{i=1}^{\ell}\|c^{jD_i}x_i\|\big)^{\gamma}}+2^d.
\end{align*}
By Theorem 2.2.4 in \cite{MS}, for any $\varepsilon>0$ there exists $K_1>0$ such that for $j\in\nat_0$ and every $i=1,\ldots,\ell$ we have $\|c	^{jD_i}x_i\|\geq\|c^{-jD_i}\|^{-1}\|x_i\|\geq K_1c^{j(\lambda_i+\varepsilon)}\|x_i\|$ and hence by change of variables $y_i=c^{j(\lambda_i-\lambda_{\ell})}x_i$ we get
\begin{align*}
& \int_{[-1,1]^{m_1}}\cdots\int_{[-1,1]^{m_{\ell}}} \frac{d\llambda_{m_1}(x_1)\cdots d\llambda_{m_{\ell}}(x_{\ell})}{\big(\sum_{i=1}^{\ell}\|c^{jD_i}x_i\|\big)^{\gamma}}\\
&\quad\leq K\int_{[-1,1]^{m_1}}\cdots\int_{[-1,1]^{m_{\ell}}} \frac{d\llambda_{m_1}(x_1)\cdots d\llambda_{m_{\ell}}(x_{\ell})}{\big(\sum_{i=1}^{\ell}c^{j(\lambda_i+\varepsilon)}\|x_i\|\big)^{\gamma}}\\
&\quad\leq K\,c^{-j\gamma(\lambda_{\ell}+\varepsilon)}\int_{[-1,1]^{m_1}}\cdots\int_{[-1,1]^{m_{\ell}}} \frac{d\llambda_{m_1}(x_1)\cdots d\llambda_{m_{\ell}}(x_{\ell})}{\big(\sum_{i=1}^{\ell}c^{j(\lambda_i-\lambda_{\ell})}\|x_i\|\big)^{\gamma}}\\
&\quad\leq K\,c^{-j\big(\gamma(\lambda_{\ell}+\varepsilon)+\sum_{i=1}^{\ell}(\lambda_i-\lambda_{\ell})m_i\big)}\int_{[-1,1]^{\widetilde m}}\|y\|^{-\gamma} \,d\llambda_{\widetilde m}(y),
\end{align*}
where $\widetilde m=m_1+\ldots+m_{\ell}$ and $y=y_1+\cdots+y_{\ell}$ with respect to the spectral decomposition of $\widetilde D=D_1\oplus\cdots\oplus D_{\ell}$. We now distinguish between the two cases considered in Example \ref{svalrange}.

{\bf Case 1:} Assume that $\sum_{i=1}^{\ell-1} \lambda_i m_i<q\leq\sum_{i=1}^{\ell} \lambda_i m_i$. For sufficiently small $\varepsilon>0$ we have $\gamma(\lambda_{\ell}+\varepsilon)<s(c^D,c^q)\lambda_{\ell}$. It suffices to consider large values of $\gamma<s(c^D,c^q)$ so that combining \eqref{c1condref} and \eqref{specialsc1} we may assume
$$\sum_{i=1}^{\ell-1} m_i+R-1<\gamma\leq\sum_{i=1}^{\ell-1} m_i+R$$
for some $R\in\{1,\ldots, m_{\ell}\}$. Hence for the singular value function by \eqref{svfct} and \eqref{jsev} we have for sufficiently large $j\in\nat$
\begin{align*}
\phi_{c^D}(\gamma) & \geq c^{j\sum_{i=1}^{\ell-1} (\lambda_i+\varepsilon)m_i}c^{j(\lambda_{\ell}+\varepsilon)(R-1)}c^{j(\lambda_{\ell}+\varepsilon)(\gamma-\sum_{i=1}^{\ell-1} m_i-r+1)}\\
& =c^{j\big((\lambda_{\ell}+\varepsilon)\gamma+\sum_{i=1}^{\ell-1} (\lambda_i-\lambda_{\ell})m_i\big)}.
\end{align*}
Note that by \eqref{ubsv} we have $\gamma<s(c^D,c^q)\leq\widetilde m$ and thus $\int_{[-1,1]^{\widetilde m}}\|y\|^{-\gamma} \,d\llambda_{\widetilde m}(y)<\infty$. By Lemma \ref{23svf} we further get as $j\to\infty$
$$c^{jq}c^{-j\big(\gamma(\lambda_{\ell}+\varepsilon)+\sum_{i=1}^{\ell} (\lambda_i-\lambda_{\ell})m_i\big)}\leq c^{jq}\phi^{-1}_{c^D}(\gamma)\to0,$$
which shows that 
$$\sum_{j=0}^\infty c^{jq}c^{-j\big(\gamma(\lambda_{\ell}+\varepsilon)+\sum_{i=1}^{\ell} (\lambda_i-\lambda_{\ell})m_i\big)}\int_{[-1,1]^{\widetilde m}}\|y\|^{-\gamma} \,d\llambda_{\widetilde m}(y)<\infty.$$

{\bf Case 2:} Assume that $\sum_{i=1}^p \lambda_i m_i<q$ then $s(c^D,c^q)=m$ and we choose $\ell=p$. For sufficiently small $\varepsilon>0$ we have $\gamma(\lambda_{p}+\varepsilon)<s(c^D,c^q)\lambda_{p}$. It suffices to consider large values of $\gamma<s(c^D,c^q)$ so that we may assume
$$\sum_{i=1}^{p-1} m_i+m_p-1<\gamma\leq\sum_{i=1}^{p-1} m_i+m_p=m=s(c^D,c^q).$$
Hence for the singular value function by \eqref{svfct} and \eqref{jsev} we have
\begin{align*}
\phi_{c^D}(\gamma) & \geq c^{j\sum_{i=1}^{p-1} (\lambda_i+\varepsilon)m_i}c^{j(\lambda_{p}+\varepsilon)(m_p-1)}c^{j(\lambda_{p}+\varepsilon)(\gamma-\sum_{i=1}^{p-1}m_i-m_p+1)}\\
& =c^{j\big((\lambda_{p}+\varepsilon)\gamma+\sum_{i=1}^{p-1} (\lambda_i-\lambda_{p})m_i\big)}
\end{align*}
for sufficiently large $j\in\nat$. Note that for $\ell=p$ we have $\gamma<s(c^D,c^q)=m=\widetilde m$ and thus $\int_{[-1,1]^{m}}\|y\|^{-\gamma} \,d\llambda_{m}(y)<\infty$. By Lemma \ref{23svf} we further get as $j\to\infty$
$$c^{jq}c^{-j\big(\gamma(\lambda_{p}+\varepsilon)+\sum_{i=1}^{p} (\lambda_i-\lambda_{p})m_i\big)}\leq c^{jq}\phi^{-1}_{c^D}(\gamma)\to0,$$
which shows that 
$$\sum_{j=0}^\infty c^{jq}c^{-j\big(\gamma(\lambda_{p}+\varepsilon)+\sum_{i=1}^{p} (\lambda_i-\lambda_{p})m_i\big)}\int_{[-1,1]^{m}}\|y\|^{-\gamma} \,d\llambda_{m}(y)<\infty.$$

Putting things together, we get \eqref{suff} in both cases, concluding the proof. 
\end{proof}

In a special situation we are able to get an analogue of Theorem \ref{31main} for the range.

\begin{cor}\label{eqrange}
If in addition to the assumptions of Theorems \ref{31main} and \ref{lbrange} we have $\lambda_p\leq a_1$, where $\lambda_p$ and $a_1$ are as in Example \ref{svalrange}, then with probability one
$$\dim_{\mathcal{H}}X ([0,1]^d)=s(c^D,c^q).$$
\end{cor}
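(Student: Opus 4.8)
The plan is to combine the lower bound from Theorem \ref{lbrange} with a matching upper bound, which in this special regime comes for free from the graph. By Theorem \ref{lbrange}, under the b.c.i.\ condition we already have $\dim_{\mathcal H} X([0,1]^d) \geq s(c^D,c^q)$ almost surely, so only the upper bound remains. Since $X([0,1]^d)$ is the image of $\Gr X([0,1]^d)$ under the Lipschitz projection $(t,x)\mapsto x$, we certainly have $\dim_{\mathcal H} X([0,1]^d) \leq \dim_{\mathcal H}\Gr X([0,1]^d)$ almost surely. By Corollary \ref{33computation}, the assumptions of Theorem \ref{31main} together with the b.c.i.\ condition give $\dim_{\mathcal H}\Gr X([0,1]^d) = s(W,\det U)$ with $W = c^E\oplus c^D = c^{E\oplus D}$ almost surely, so it suffices to show that the extra hypothesis $\lambda_p \leq a_1$ forces the equality $s(c^{E\oplus D},c^q) = s(c^D,c^q)$.

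The second step is therefore purely a computation with the formulas from Examples \ref{sexpl} and \ref{svalrange}, using that the union $0<\gamma_1\leq\cdots\leq\gamma_{d+m}$ of the real parts of the eigenvalues of $E$ and $D$ has a very rigid structure when $\lambda_p \leq a_1$: every $\lambda_i$ precedes every $a_k$, so the first $m$ of the $\gamma_j$ are exactly $\lambda_1\leq\cdots\leq\lambda_m$ (the real parts of the eigenvalues of $D$, listed with multiplicity) and the last $d$ are $a_1\leq\cdots\leq a_d$. One then reads off the integer $r$ from \eqref{defr}: since $q = a_1+\cdots+a_d = \trace(E)$ and $\sum_{j=1}^{m}\gamma_j = \sum_{i=1}^p \lambda_i m_i = \trace(\operatorname{Re} D)$, the condition \eqref{defr} defining $r$ for $W = c^{E\oplus D}$ reduces to a condition involving only the $\lambda$'s up to the point where the partial sums reach $q$ (in Case 1 of Example \ref{svalrange} we have $q \leq \sum_{i=1}^p\lambda_i m_i$, so $r \leq m$ and $\gamma_j = $ the $\lambda$-sequence throughout the relevant range; in Case 2, $q > \sum_{i=1}^p\lambda_i m_i$ so the partial sums only reach $q$ after exhausting all $m$ of the $\lambda$'s, and one checks $r = m$, $s = m$). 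In either case the right-hand side of \eqref{specials} for $W = c^{E\oplus D}$ coincides with the expression \eqref{srange} for $s(c^D,c^q)$, because the $a_k$'s never enter: they all sit beyond position $m \geq r$ in the ordering.

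The main (and really only) obstacle is bookkeeping: one must be careful about the boundary cases where $q$ equals one of the partial sums $\sum_{i=1}^{\ell}\lambda_i m_i$ (so that $s$ is an integer and $r$ is determined by the non-strict inequality in \eqref{defr}), and one must make sure that $\lambda_p \le a_1$ is exactly the hypothesis needed to guarantee $r \le m$ in Case 2's borderline $q = \sum_{i=1}^p \lambda_i m_i$ situation as well as in Case 1. Once $s(c^{E\oplus D},c^q) = s(c^D,c^q)$ is established, we conclude
$$s(c^D,c^q) \leq \dim_{\mathcal H} X([0,1]^d) \leq \dim_{\mathcal H}\Gr X([0,1]^d) = s(c^{E\oplus D},c^q) = s(c^D,c^q)$$
almost surely, which gives the claimed equality. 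No new probabilistic input beyond Theorems \ref{31main} and \ref{lbrange} is required; the condition $\lambda_p \le a_1$ is precisely what makes the graph no larger (dimensionally) than the range.
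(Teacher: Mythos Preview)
Your overall strategy matches the paper's: lower bound from Theorem \ref{lbrange}, upper bound via the Lipschitz projection $(t,x)\mapsto x$ combined with Corollary \ref{33computation}, and the observation that $\lambda_p\le a_1$ makes $s(c^{E\oplus D},c^q)=s(c^D,c^q)$. However, your handling of Case 2 contains a genuine error. When $q>\sum_{i=1}^p\lambda_i m_i$, the partial sums $\sum_{j=1}^k\gamma_j$ do reach $q$ only \emph{after} exhausting all $m$ of the $\lambda$'s, as you say, but this means $r>m$ in \eqref{defr}, not $r=m$; and then \eqref{specials} gives $s(c^{E\oplus D},c^q)=r-1+\gamma_r^{-1}\big(q-\sum_{j=1}^{r-1}\gamma_j\big)>m$, strictly. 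For a concrete instance take $d=m=1$, $a_1=1$, $\lambda_1=\tfrac12$: then $q=1>\tfrac12$, the ordered list is $\gamma_1=\tfrac12$, $\gamma_2=1$, so $r=2$ and $s(c^{E\oplus D},c^q)=1+\tfrac{1}{1}\big(1-\tfrac12\big)=\tfrac32\neq 1=s(c^D,c^q)$. Hence in Case 2 the identity $s(c^{E\oplus D},c^q)=s(c^D,c^q)$ fails and your final chain of inequalities does not close.

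The repair is immediate and is exactly what the paper does: whenever $s(c^D,c^q)=m$ (in particular throughout Case 2) there is nothing to prove, because $X([0,1]^d)\subset\mathbb{R}^m$ trivially has Hausdorff dimension at most $m$, and Theorem \ref{lbrange} already gives the matching lower bound. The graph comparison and the identity $s(c^{E\oplus D},c^q)=s(c^D,c^q)$ are only needed in Case 1, where your argument is correct.
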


\begin{proof}
By Theorem \ref{lbrange} $s(c^D,c^q)$ is a lower bound for $\dim_{\mathcal{H}}X ([0,1]^d)$ almost surely. If $s(c^D,c^q)=m$ in \eqref{srange} there is nothing to prove. Otherwise, if $\sum_{i=1}^{\ell-1} \lambda_i m_i<q\leq\sum_{i=1}^{\ell} \lambda_i m_i$, a comparison of \eqref{srange} with \eqref{specials} together with the assumption $\lambda_p\leq a_1$ directly shows that $s(c^{E\oplus D},c^q)=s(c^D,c^q)$. Thus by Theorems \ref{31main} and \ref{24cardim} we get the upper bound
$$\dim_{\mathcal{H}}X ([0,1]^d)\leq\dim_{\mathcal{H}}\Gr X ([0,1]^d)=s(c^{E\oplus D},c^q)=s(c^D,c^q)$$
almost surely, since we assumed \eqref{3Frostmancondition} and the b.c.i.\ condition.
\end{proof}


\section{Examples}

To demonstrate the applicability of our main results, we give examples of large classes of self-affine random fields for which \eqref{3Frostmancondition} holds and the precise values of the Hausdorff dimension of the graph and the range are already known.

\subsection{Operator-self-similar stable random fields} 

Let $E \in \mathbb{R}^{d \times d}$ and $D \in \mathbb{R}^{m \times m}$ be matrices and assume that the eigenvalues of $E$ and $D$ have {positive real part}. A random field $\{ X(t)\}_{t \in \rd }$ with values in $\rmm$ is said to be $(E,D)$-operator-self-similar if
\begin{equation*}
	\{ X( c^E t)\}_{t \in \mathbb{R}^d } \stackrel{\rm f.d.}{=} \{ c^D X(t)\}_{ t \in \mathbb{R}^d }  \quad \text{ for all } c>0.
\end{equation*}
These fields have been introduced in \cite{LiXiao} as a generalization of both operator scaling random fields \cite{BMS} and operator-self-similar processes \cite{HudsonMason,LahaRo}. Moreover, for $d=m=1$ one obtains the well-known class of self-similar processes.
We say that a random field $\{ X(t) : t \in \rd \}$ is {symmetric $\alpha$-stable (S$\alpha$S) for some} $\alpha \in (0,2]$ if any linear combination $\sum_{k=1}^m b_k X(t_k)$ is a symmteric $\alpha$-stable random vector. In \cite[Theorem 2.6]{LiXiao} it is shown that a proper, stochastically continuous $(E,D)$-operator-self-similar S$\alpha$S random field $X$ with stationary increments can be given by a harmonizable representation, provided that $0<\lambda_1\leq\ldots\leq\lambda_m<1$ for the real parts of the eigenvalues of $D$ and $1<a_1<\ldots<a_p$ for the distinct real parts of the eigenvalues of $E$.
This includes the case of operator fractional Brownian motions studied in \cite{MasonXiao,DP1,DP2} and operator scaling stable random fields \cite{BMS}, where corresponding Hausdorff dimension results already were already established in \cite{MasonXiao,BMS,BL}.
Note that, since the real parts of the eigenvalues of $E$ and $D$ are assumed to be positive, the matrices $c^E$ and $c^D$ are contracting for any $0<c<1$. In particular $X$ is a $(c^E, c^D)$-self-affine random field for any $0<c<1$, since its sample functions are continuous.

We now argue that the occupation measure $\tau_{X}$ satisfies the b.c.i.\ condition with respect to $W = c^E \oplus c^D=c^{E\oplus D}$. Recall that any symmetric $\alpha$-stable random variable has a smooth and bounded probability density (see \cite{SamorodTaqq}) so that the density $y\mapsto p_t(y)$ of $X(t)$ exists for all $t \neq 0$ and the mapping $(t,y)\mapsto p_t(y)$ is continuous due to stochastic continuity of the field $X$. By Lemma \ref{specialocm} we only have to prove that there is a constant $K>0$, not depending on $t$ and $y$, such that
$ p_t (y) \leq K$
for any $(t,y) \in [-1,1]^{d+m} \setminus W \cdot [-1,1]^{d+m}$. In order to show this, we will use generalized polar coordinates with respect to $E$, initially introduced in \cite{BMS}. For any $t \in \rd \setminus \{0\}$ one can uniquely write
$ t = \rho_E( t)^E l_E(t)$
with $E$-homogeneous radius $\rho_E(t) >0$ and direction vector $l_E(t) \in S_E = \{ t \in \rd : \rho_E(t) = 1 \}$. Note that $S_E$ is compact and does not contain $0$. The operator self-similarity implies
$$p_t (y) = ( \det c^D )^{-1} p_{c^{-E} t} (c^{-D} y) \quad\text{ for any } c>0,\,t\in\rd\setminus\{0\},\,y\in\rmm$$
{and} for $(t,y) \in [0,1]^{d+m} \setminus W\cdot [0,1]^{d+m}$ we get
\begin{align*}
p_t (y) & = p_{\rho_E( t)^E l_E(t) } (y) = \big( \det \rho_E(t) \big)^{-D} p_{\rho_E(t)^{-E} \rho_E(t)^E l_E(t) } \big( \rho_E(t)^{-D} y \big) \\
& = \rho_E(t) ^{-\trace (D)} p_{l_E(t) } \big( \rho_E(t)^{-D} y \big) \\
& \leq K_1 \cdot \max_{ \theta \in S_E} \sup_{y \in \rmm} p_\theta (y) \leq K,
\end{align*}
where $K_1,K>0$ are constants independent of $t$ and $y$. Hence, the b.c.i.\ condition holds and Theorem \ref{24cardim} allows to compute the carrying dimension of $\tau_{X}$ as
$$ \cardim \tau_{X} = s (W, {c^q} ) \quad\text{ almost surely for any }0<c<1,$$
where $W = c^{E \oplus D}$ and $q = \trace (E)$.

The Hausdorff dimension of the graph of $X$ has been computed in \cite[Theorem 4.1]{Soenmez1} for $\alpha=2$ and \cite[Theorem 5.1]{Soenmez2} for $\alpha \in (0,2)$, where the lower bound in the computation is proven through \eqref{3Frostmancondition}. Indeed it is shown that with probability one $\dim_{\mathcal{H}} \Gr X ([0,1]^d)$ coincides with
\begin{equation}\label{Hdimossrf}\begin{cases} 
\displaystyle\lambda_\ell^{-1}\Big(\sum_{k=1}^p a_k \mu_k + \sum_{i=1}^\ell (\lambda_\ell - \lambda_i)\Big) & \text{ if } \displaystyle\sum_{i=1}^{\ell-1} \lambda_i < \displaystyle\sum_{k=1}^p a_k \mu_k \leq \displaystyle\sum_{i=1}^{\ell} \lambda_i  ,\\ 		
\displaystyle\sum_{j=1}^\ell \frac{\tilde{a}_j}{\tilde{a}_\ell}  \tilde{\mu}_j + \displaystyle\sum_{j=\ell+1}^p  \tilde{\mu}_j +\displaystyle \sum_{i=1}^m \Big(1- \displaystyle\frac{\lambda_i}{\tilde{a}_\ell}\Big) & \text{ if } \displaystyle\sum_{k=1}^{\ell-1} \tilde{a}_k  \tilde{\mu}_k \leq\displaystyle \sum_{i=1}^m \lambda_i <\displaystyle \sum_{k=1}^{\ell} \tilde{a}_k  \tilde{\mu}_k ,
\end{cases}\end{equation}
where $\mu_1, \ldots, \mu_p$ denote the multiplicities of $a_1, \ldots, a_p$ respectively, $\tilde{a}_j = a_{p+1-j}$ and $\tilde{\mu}_j = \mu_{p+1-j}$ for $1 \leq j \leq p$. Since the assumptions of Theorem \ref{31main} are fulfilled, Corollary \ref{33computation} allows us to state that \eqref{Hdimossrf} coincides with $s(c^{E \oplus D}, {c^q} )$ for any $c\in(0,1)$, which can also be verified by elementary calculations using Example \ref{sexpl} as follows.\\
If $\sum_{i=1}^{\ell-1} \lambda_i <\sum_{k=1}^p a_k \mu_k \leq \sum_{i=1}^{\ell} \lambda_i$ then $r=\ell$ in \eqref{defr} and by \eqref{specials} we get
\begin{align*}
s(c^{E \oplus D}, {c^q} ) & =\ell-1+\frac1{\lambda_\ell}\Big(q-\sum_{i=1}^{\ell-1}\lambda_i\Big)\\
& =\lambda_\ell^{-1}\Big(\sum_{k=1}^p a_k\mu_k+\sum_{i=1}^{\ell}(\lambda_\ell-\lambda_i)\Big).
\end{align*}
On the other hand, if $\sum_{k=1}^{\ell-1} \tilde{a}_k  \tilde{\mu}_k \leq\sum_{i=1}^m \lambda_i <\sum_{k=1}^{\ell} \tilde{a}_k  \tilde{\mu}_k $, or equivalently
$$\sum_{i=1}^m\lambda_i+\sum_{k=1}^{p-\ell}a_k\mu_k<q\leq \sum_{i=1}^m\lambda_i+\sum_{k=1}^{p-\ell+1}a_k\mu_k$$
then we know that
$$r=m+\sum_{k=1}^{p-\ell}\mu_k+R\quad\text{ for some }R\in\{1,\ldots,\mu_{p-\ell+1}\}$$
in \eqref{defr} and by \eqref{specials} we get
\begin{align*}
s(c^{E \oplus D}, {c^q} ) & =m+\sum_{k=1}^{p-\ell}\mu_k+R-1\\
& \,\qquad+\frac1{a_{p-\ell+1}}\Big(q-\sum_{i=1}^m\lambda_i-\sum_{k=1}^{p-\ell}a_k\mu_k-a_{p-\ell+1}(R-1)\Big)\\
& =\sum_{j=\ell+1}^p  \tilde{\mu}_j+\frac1{a_{p-\ell+1}}\sum_{k=p-\ell+1}^p a_k\mu_k+\sum_{i=1}^m\Big(1-\frac{\lambda_i}{a_{p-\ell+1}}\Big)\\
& =\displaystyle\sum_{j=1}^\ell \frac{\tilde{a}_j}{\tilde{a}_\ell}  \tilde{\mu}_j + \displaystyle\sum_{j=\ell+1}^p  \tilde{\mu}_j +\displaystyle \sum_{i=1}^m \Big(1- \displaystyle\frac{\lambda_i}{\tilde{a}_\ell}\Big).
\end{align*}
Further, by \cite[Theorem 4.1]{Soenmez1} for $\alpha=2$ and \cite[Theorem 5.1]{Soenmez2} for $\alpha\in(0,2)$ we have almost surely
$$\dim_{\mathcal{H}} X ([0,1]^d)=\begin{cases}
m & \text { if } \displaystyle\sum_{i=1}^m\lambda_i<q,\\
\lambda_\ell^{-1}\Big(q+\displaystyle\sum_{i=1}^\ell (\lambda_\ell-\lambda_i)\Big) & \text { if } \displaystyle\sum_{i=1}^{\ell-1}\lambda_i<q\leq\displaystyle\sum_{i=1}^{\ell}\lambda_i.
\end{cases}$$
In accordance with Corollary \ref{eqrange}, a comparison with \eqref{srange} directly shows that this value coincides with $s(c^D,c^q)$ for any $c\in(0,1)$, indicating that the lower bound in Theorem \ref{lbrange} is in fact equal to the Hausdorff dimension of the range for the harmonizable representation of any $(E,D)$-operator-self-similar stable random field. All the above results also hold for a moving average representation of the random field in the Gaussian case $\alpha=2$ as shown in \cite{Soenmez1}. However, for a corresponding moving average representation in the stable case $\alpha\in(0,2)$, constructed in \cite{LiXiao}, it is questionable if our results are applicable, since these fields do not share the same H\"older continuity properties and thus the joint measurability of sample functions (assumption (iv) in the Introduction) may be violated; cf.\ \cite{BL,BL3} for details.

\subsection{Operator semistable L\'evy processes} 

To give an example of random fields that are not operator self-similar but possess the weaker discrete scaling property of self-affinity, we will now consider operator semistable L\'evy processes for $d=1$ with the restriction to $t\geq0$. 
Let $X=\{X(t)\}_{t\geq0}$ be a strictly operator-semi-selfsimilar process in $\rr^m$, i.e.
\begin{equation}\label{osslevy}
\{X(ct)\}_{t\geq0}\stackrel{\rm f.d.}{=}\{c^DX(t)\}_{t\geq0}\quad\text{ for some }c\in(0,1),
\end{equation}
where $D\in\rr^{m\times m}$ is a scaling matrix. If $X$ is a proper L\'evy process, it is called an operator semistable process and it is known that the real part of any eigenvalue of $D$ belongs to $[\frac12,\infty)$, where $\frac12$ refers to a Brownian motion component; see \cite{MS} for details. Hence this process can be considered as a self-affine random field with $d=1$ and  non-singular contractions $U=c$, and $V=c^D$. This includes operator stable L\'evy processes, where \eqref{osslevy} holds for any $c>0$, and multivariate stable L\'evy processes, where additionally $D$ is diagonal.  For these particular cases, Hausdorff dimension results for the range and the graph have been established in \cite{BKMS,MX,BG,PT,XLin,Hou}. Let $\frac12\leq \lambda_1<\ldots<\lambda_p$ denote the distinct real parts of the eigenvalues of $E$ with multiplicity $m_1,\ldots,m_p$, then recently Wedrich \cite{Wedrich} (cf.\ also \cite{KerMeeXia}) has shown that for any operator semistable L\'evy process $X$ almost surely
\begin{equation}\label{Lina}
\dim_{\mathcal H}\Gr X([0,1])=\begin{cases}
\max\{\lambda_1^{-1},1\} & \text{ if }\lambda_1^{-1}\leq m_1,\\
1+\max\{\lambda_2^{-1},1\}(1-\lambda_1) & \text{ else,}
\end{cases}
\end{equation}
where the lower bound in the computation is proven through \eqref{3Frostmancondition}. Moreover, in view of Lemma \ref{specialocm} it follows directly from \cite[Lemma 2.2]{Kern} that $\tau_X$ satisfies the b.c.i.\ condition. Since the assumptions of Theorem \ref{31main} are fulfilled, Corollary \ref{33computation} allows us to state that \eqref{Lina} coincides with $s(c\oplus c^D, c)=\cardim\tau_X$ irrespectively of $c\in(0,1)$, which can also easily be verified by elementary calculations. 
Further, by Corollary 3.2 and Theorem 3.3 in \cite{Kern} we have 
almost surely
\begin{equation}\label{Linarange}
\dim_{\mathcal H}X([0,1])=\begin{cases}
\lambda_1^{-1} & \text{ if }\lambda_1^{-1}\leq m_1,\\
1+\lambda_2^{-1}(1-\lambda_1) & \text{ if }\lambda_1^{-1}>m_1=1,\, m\geq2,\\
1 & \text{ if }\lambda_1^{-1}>m_1=1,\, m=1.
\end{cases}
\end{equation}
This value coincides with $s(c^D,c)$ as will be shown below, indicating that the lower bound in Theorem \ref{lbrange} is in fact equal to the Hausdorff dimension of the range for any operator semistable L\'evy process. Since \eqref{Linarange} shows that $\dim_{\mathcal H}X([0,1])\in(0,2]$ almost surely, it suffices to consider the singular value function $\phi_{W^n}(s)$ of $W=c^D$ for $s\in(0,2]$. We distinguish between the following cases.

{\it Case 1}: $\lambda_1\geq 1$, then for $s\in(0,1]$ the singular value function $\phi_{W^n}(s)$ asymptotically behaves as $c^{n\lambda_1s}$ in the sense of \eqref{jsev} showing that $s(c^D,c)=\lambda_1^{-1}$.

{\it Case 2}: $\lambda_1<1$ and $m=1$, then as in the first case for $s\in(0,1]$ the singular value function $\phi_{W^n}(s)$ asymptotically behaves as $c^{n\lambda_1s}$ in the sense of \eqref{jsev}. Due to the restriction $s\leq m=1$ we have $s(c^D,c)=1$.

{\it Case 3}: $\lambda_1<1$, $m_1=1$ and $m\geq2$, then for $s\in(1,2]$ the singular value function $\phi_{W^n}(s)$ asymptotically behaves as $c^{n(\lambda_1+\lambda_2(s-1))}$ in the sense of \eqref{jsev} showing that $s(c^D,c)=1+\lambda_2^{-1}(1-\lambda_1)$.

\vspace*{2ex}
The operator semistable L\'evy processes may be generalized to multiparameter operator semistable processes with $d\geq2$ as in \cite{Ehm,XLin} or to certain operator semi-selfsimilar strong Markov processes as in \cite{LiuX}, for which corresponding Hausdorff dimension results of the sample functions are not yet available in full generality from the literature. Our approach will give promising candidates for the Hausdorff dimension of the range and the graph of such fields in terms of the real parts of the eigenvalues of the scaling exponent. These serve at least as lower bounds by Theorems \ref{31main} and \ref{lbrange}, while corresponding upper bounds should be pursued elsewhere. We conjecture that these candidates are the precise Hausdorff dimension values.

\bibliographystyle{plain}

\end{document}